\title[On integrable generalizations of the pentagram map]{On integrable generalizations of the pentagram map}
\author{Gloria Mar\'i Beffa}
\newtheorem{theorem}{Theorem}
\numberwithin{theorem}{section}
\newtheorem{lemma}[theorem]{Lemma}
\newtheorem{proposition}[theorem]{Proposition}
\theoremstyle{definition}
\newtheorem{definition}[theorem]{Definition}
\def\RP{\mathbb {RP}}
\def\R{\mathbb R}
\def\Z{\mathbb Z}
\def\PSL{\mathrm{PSL}}
\def\SL{\mathrm{SL}}
\def\GL{\mathrm{GL}}
\def\Sh{\sigma}
\def\p{{\bf p}}
\newcommand{\ai}[2]{a_{#1}^{#2}}
\def\wG{\widehat{G}}
\def\r{{\bf r}}
\def\ab{{\bf a}}
\def\w{{\bf w}}
\def\ve{{\bf v}}
\begin{document}
\maketitle
\begin{abstract} In this paper we prove that the generalization to $\RP^n$ of the pentagram map defined in \cite{KS} is invariant under certain scalings for any $n$. This property allows the definition of a Lax representation for the map, to be used to establish its integrability.\end{abstract}
\section{Introduction}
The pentagram map is defined on planar, convex $N$-gons. The map $T$ takes a vertex $x_k$ to the intersection of two segments: one is created by joining the vertices to the right and to the left of the original one, $\overline{x_{k-1}x_{k+1}}$, the second one by joining the original vertex to the second vertex to its right $\overline{x_kx_{k+2}}$ (see Fig. 1). These newly found vertices form a new $N$-gon. The pentagram map takes the first $N$-gon to this newly formed one. As surprisingly simple as this map is, it has an astonishingly large number of properties.

\vskip 2ex
\centerline{\includegraphics[height=1.3in]{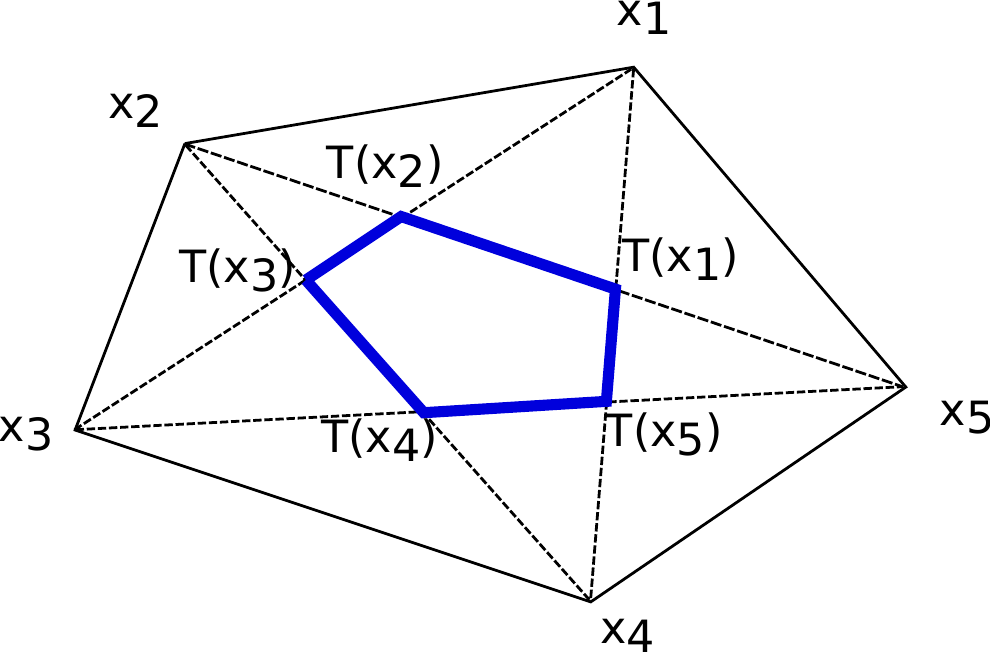}}
\vskip 1ex
\centerline{Fig. 1}
\vskip 2ex

The name pentagram map comes from the star formed in Fig. 1 when applied to pentagons. It is a classical fact that if $P$ is a pentagon, then $T(P)$ is projectively equivalent to $P$. It also seems to be classical that if $P$ is a hexagon, then $T^2(P)$ is projectively equivalent to $P$ as well. The constructions performed to define the pentagram map can be equally carried out in the projective plane. In that case { $T$ defined on the moduli space of pentagons (as described by the projective invariants of the polygons) is the identity, while defined on the moduli space of hexagons is an involution}. In general, one should not expect to obtain a closed orbit for any $N$; in fact orbits exhibit a quasi-periodic behavior classically associated to completely integrable systems. This was conjectured in \cite{S3}.

A recent number of papers (\cite{OST, OTS2, S1, S2, S3, ST, FS}) have studied the pentagram map and stablished its completely integrable nature, in the Arnold-Liouville sense. The authors of \cite{OST} defined the pentagram map on what they called  {\it $N$-twisted polygons}, that is, infinite polygons with vertices $x_k$, for which $x_{N+k} = M(x_k)$ for all $k$, where $M$ is the {\it monodromy}, a projective automorphism of $\RP^2$. They proved that, when written in terms of the {\it projective invariants} of twisted polygons, the pentagram map was in fact Hamiltonian and completely integrable. They displayed a set of preserved quantities and proved that almost every universally convex $N$-gon lie on a smooth torus with a $T$-invariant affine structure, implying that almost all the orbits follow a quasi-periodic motion under the map. The authors also showed that the pentagram map, when expressed in terms of projective invariants, is a discretization of the Boussinesq equation, a well-known  completely integrable system of PDEs. Integrability in the case of closed gons was proved in \cite{FS} and \cite{OTS2}.

The Boussinesq equation is one of the best known completely integrable PDEs. It is one of the simplest among the so-called Ader--Gel'fand-Dickey (AGD) flows. These flows are biHamiltonian and completely integrable. Their first Hamiltonian structure was originally defined by Adler in \cite{A} and proved to be Poisson by Gel'fand and Dickey in \cite{GD}, although the original definition is due to Lax. In \cite{M1} the author proved that some AGD flows of higher order can be obtained as the continuous limit of maps defined through the intersection of subspaces of different dimensions, but no complete integrability was proved. In \cite{KS} the authors studied the direct analogue of the pentagram map in $\RP^n$. It is defined by the intersection of $n$ hyperplanes obtained by shifting $n$ times a particular hyperplane containing every other vertex and as many vertices as needed. The authors proved that the map defined on the projective invariants of the polygons is completely integrable for $n=3$, describing also in detail the behavior of its orbits. They described a parameter-free Lax representation for the induced map on the invariants, and they conjectured that the map was invariant under scaling in the general case. This would usually guarantee the existence of  a standard Lax representation (and in that sense the integrability of the map) by introducing the scaling as spectral parameter in the parameter free one.  They conjectured the form of the scaling in all dimensions, proved it for $n=3$ and provided a computer-aided proof in dimensions $4$, $5$ and $6$ (the original version showed the wrong scaling for even dimensions, but this has been corrected to match the one in this paper).

In this paper we describe how any map induced on invariants by a map on polygons possesses a parameter-free Lax representation (in particular the pentagram map and all its generalizations). We then prove the conjecture in \cite{KS}, that is, there are scalings that leave the pentagram invariant in all dimensions, and they can be used to create the non-trivial Lax representation (by non-trivial we mean that the parameter cannot be removed by merely gauging it out). 

The proof of the invariance under the scaling is in fact a consequence of linear algebra: in section 2 we show how any map induced on invariants by a map on polygons possesses a parameter-free Lax representation, and using this representation we describe the pentagram map as the unique solution of a linear system of equations. Section 3 studies the homogeneity and degrees of the determinants involved in Cramer's rule, and uses the results to assert the invariance of the map. A key point that allows us to simplify the rather involved calculations is the fact that, even though the pentagram map in $\RP^n$ is described by intersecting $n$ hyperplanes, it can also be equally described as the intersection of only two planes of dimension $s$, if $n = 2s$, or two planes of dimensions $s$ and $s+1$ if $n = 2s+1$. This fact, explained in section 2, allows us to simplify the map so it can be studied in detail in section 3. Finally we conclude with a short discussion on the integrability of other possible generalizations.

This paper is supported by NSF grant DMS \#0804541 and a Simons Foundation's fellowship for 2012-13.

\section{Background and initial results}
\subsection{Discrete projective group-based moving frames}

 In this section we will describe basic definitions and facts needed along this paper on the subject of discrete group-based moving frames. They are taken from \cite{MMW} and occasionally slightly  modified to fit our needs. 

Let $M$ be a manifold and let $G\times M\to M$ be the action of a group $G$ on $M$. We will assume that $G \subset \GL(n,\R)$.
  \begin{definition}[Twisted $N$-gon]
{\it A twisted $N$-gon} in $M$ is a map $\phi:\Z\to M$ such that for some fixed $g\in G$ we have $ \phi(p+N) = g\cdot \phi(p)$ for all $p\in \Z$. (The notation $\cdot$ represents the action of $G$ on $M$.) The element $g\in G$ is called {\it the monodromy} of the polygon. We will denote a twisted $N$-gon by its image $x = (x_k)$ where $x_k = \phi(k)$.
 \end{definition}
The main reason to work with twisted polygons is our desire to have periodic invariants.
We will denote by $P_N$ the space of twisted $N$-gons in $M$. If $G$ acts on $M$, it also has a natural induced action on $P_N$ given by the diagonal action $g\cdot (x_k) = (g\cdot x_k)$.

 \begin{definition}[Discrete moving frame] Let $G^{N}$ denote the Cartesian product of $N$ copies of the group $G$. Elements of $G^N$ will be denoted by $(g_k)$. Allow $G$ to act on the left on $G^{N}$ using the  diagonal action
\(
g\cdot (g_k) = (g g_k).
\)
We say a map 
\[
\rho:P_N \to G^{N}
\] 
is a  left  {\it discrete moving frame} if $\rho$ is equivariant with respect to the action of $G$ on $P_N$ and the left diagonal action of $G$ on $G^{N}$. Since $\rho(x)\in G^{N}$, we will denote by $\rho_k$ its $k$th component; that is $\rho = (\rho_k)$, where $\rho_k(x) \in G$ for all $k$, $x = (x_k)$. \end{definition}

In short, $\rho$ assigns an element of the group to each vertex of the polygon in an equivariant fashion. For more information on discrete moving frames see \cite{MMW}.
 These group elements carry the invariant information of the polygon.
 
 \begin{definition}[Discrete invariant] Let $I:P_N \to \R$ be a function defined on $N$-gons. We say that $I$ is a scalar {\it discrete invariant} if
 \begin{equation}\label{invdef}
 I((g\cdot x_k)) = I((x_k))
 \end{equation}
 for any $g\in G$ and any $x = (x_k)\in P_N$.
 \end{definition}
 We will naturally refer to vector discrete invariants when considering vectors whose components are discrete scalar invariants.

 \begin{definition}[Maurer--Cartan matrix] 
Let ${\rho}$ be a  left discrete moving frame evaluated along a twisted $N$-gon. The element of the group
\[
K_k = \rho^{-1}_k\rho_{k+1} 
\]
is called the left  {\it $k$-Maurer--Cartan matrix} for $\rho$. We will call the equation $\rho_{k+1} = \rho_k K_k$ the  left  {\it $k$-Serret--Frenet equation}. 
\end{definition}
 The entries of a Maurer--Cartan matrix are functional generators of {\it all discrete invariants} of polygons, as it was shown in \cite{MMW}. 

 Assume we have a map $T: P_N \to P_N$, equivariant respect to the diagonal action, and let $(\rho_k)$ be a moving frame. Then
 \begin{equation}\label{invmap}
T(x)_k =\rho_k(x)\cdot \w_k(x)
\end{equation}
where $\w_k:P_N\to G/H$ is invariant, that is, $\w_k(g\cdot x) = \w_k(x)$ for any $g\in G$ (\cite{MMW}).

Finally, let us extend a map $T:P_N \to P_N$ to functions of $x=(x_k)$ the standard way using the pullback ($T(f(x)) = f(T(x)$). Let us also extend it to elements of the group by applying it to each entry of the matrix. We will denote the map with the same letter, abusing notation. Define the matrix
\begin{equation}\label{N}
N_k = \rho_k^{-1} T(\rho_k).
\end{equation}The following relationship with the Maurer-Cartan matrix is straightforward
\begin{equation}\label{structure}
T(K_k) =  T(\rho_k^{-1})T(\rho_{k+1}) = T(\rho_k)^{-1}\rho_k\rho_k^{-1}\rho_{k+1}\rho_{k+1}^{-1} T(\rho_{k+1}) = N_k^{-1} K_k N_{k+1}.
\end{equation}

Since $N_k$ is invariant, one can write it in terms of the invariants of the associated polygon. In fact, in most cases there are algorithms that achieve this by simply solving a system of equations, as shown in \cite{MMW}. That means that, if $K_k$ is, for example, affine on the invariants (which is the case for $\RP^n$, as we will see later),  (\ref{structure}) is a {\it parameter free Lax representation} of the map as defined on the invariants. The Serret--Frenet equations $\rho_{k+1} = \rho_k K_k$ together with $T(\rho_k) = \rho_k N_k$ define a parameter-free discrete AKNS representation of the map $T$. with the moving frame as its solution. This representation exists {\it for any} map induced on invariants by a map defined on polygons (a continuous version of this fact also exists, see \cite{M2}). 

Of course, integrability is usually achieved through a non-trivial Lax representation containing a spectral parameter. But if the map turns out to be invariant under a certain scaling, or indeed under the action of a $1$-parameter group, introducing that scaling in (\ref{structure}) as spectral parameter will give raise to a regular Lax representation - and in that sense to integrability. This is a well known approach to create Lax representations in integrable systems and it was used in \cite{KS} to generate the Lax representation for the cases they studied, conjecturing that it also existed for higher dimensional maps. Thus, once the invariance under scaling is proved for the higher dimensional cases, the existence of a Lax representation follows.

In the particular case of $G = \PSL(n+1, \R)$ and $M = \RP^n$ both moving frame and Maurer-Cartan matrices are well-known (see \cite{OST}). Given $(x_k) \in (\RP^n)^N$, and assuming non-degeneracy, we can find unique lifts of $x_k$ to $\R^{n+1}$, which we will call $V_k$, such that $\det(V_k, V_{k+1}, \dots, V_{k+n}) = 1$, for all $k$. One can do that {\it whenever $N$ and $n+1$ are co-primes}; a proof of this fact can be found in \cite{MW}, although it is probably a classical result. (To be precise, the lift is only unique for $n$ even. Since the monodromy is an element of $\PSL(n+1)$, to have it act on $\R^{n+1}$ one will have to fix a choice of monodromy in $\SL(n+1)$ - there are two such choices - before concluding that the lift is unique.)  Since the projective action becomes linear on lifts, the left projective moving frame is then given by 
\begin{equation}\label{projectivemf}
\rho_k = (V_k, V_{k+1}, \dots, V_{k+n}),
\end{equation}
which exists on non-degenerate polygons (by non-degenerate we mean that the lifts exist and are independent). Finally, since $\{V_k, V_{k+1}, \dots, V_{k+n}\}$ generate $\R^{n+1}$, there exist algebraic functions of $x$, we call them  $a_k^i$, such that 
\begin{equation}\label{Vinv}
V_{k+n+1} = a_k^n V_{k+n}+ a_k^{n-1} V_{k+n-1}+ \dots+ a_k^1 V_{k+1} + (-1)^n V_k
\end{equation}
where the coefficient $(-1)^n$ is imposed by the condition $\det \rho_k = 1$ for all $k$. Therefore, the Maurer-Cartan matrix is given by
\begin{equation}\label{projectivemcm}
K_k = \rho_k^{-1} \rho_{k+1} = \begin{pmatrix} 0&0&\dots & 0 & (-1)^n\\ 1&0&\dots&0&a_k^1\\ 0&1&\dots & 0& a_k^2\\ \vdots&\ddots&\ddots&\vdots&\vdots\\ 0&\dots&0&1&a_k^n\end{pmatrix}.
\end{equation}
Those familiar with the subject will recognize (\ref{projectivemcm}) as the discrete analogue of the Wilczynski invariants for projective curves (see \cite{Wi}).  
Finally,  using (\ref{invmap}) we conclude that any invariant map on polygons is defined on the lifts as
\begin{equation}\label{Veq}
T(V_k) = \rho_k \w_k
\end{equation}
for some invariant vector $\w_k\in \R^{n+1}$. To guarantee that $T$ is the lift of a projective map, these vectors have the extra condition
\begin{equation}\label{norm}
\det(\rho_k\w_k, \rho_{k+1}\w_{k+1}, \dots, \rho_{k+n}\w_{k+n}) = 1.
\end{equation}
Factoring $\rho_k$ and using the definition of the Maurer-Cartan matrix, the condition can be written as
\begin{equation}\label{Nmain}
\det(G_k, G_{k+1}, \dots, G_{k+n}) = 1
\end{equation}
where $G_k = \w_k$ and $G_{k+s} = K_k K_{k+1}\dots K_{k+s-1}\w_{k+s}$ for $s=1, \dots, n$. With this notation $N_k = \rho_k^{-1}T(\rho_k)$ can be written as
\begin{equation} \label{N-1}
N_k = \rho_k^{-1} T(\rho_{k}) = (G_k, G_{k+1}, \dots, G_{k+n}),
\end{equation}
a formulation central to our approach.

\subsection{The pentagram map and its generalizations}
 The pentagram map is defined on convex polygons on the projective plane, and it maps a vertex $x_k$ to the intersection of the segments $\overline{x_{k-1}x_{k+1}}$ and $\overline{x_k x_{k+2}}$. The new polygon is formed by these vertices. In the last couple of years some generalizations appeared in the literature: unsing the fact that the pentagram map is a discretization of the Bousinessq  equation (or $(2,3)$ AGD flow), the author studied in \cite{M2} possible discretizations of higher order AGD flows achieved by intersecting subspaces in $\RP^n$ of different dimensions. In \cite{KS} the authors defined a generalization of the pentagram map, defined as the intersection of $n$ hyperplanes in $\RP^n$ which are consecutive shifts of a particular hyperplane. As with the pentagram map, the particular hyperplane is obtained by joining every other vertex until it is uniquely determined. The authors of \cite{KS} showed that the continuous limit of this map is the higher dimensional Boussinesq equation (or $(2,n+1)$ AGD flow). We focus on the generalization in \cite{KS} which we describe next.
 
 Let $\{x_k\}$ be a {\it twisted} polygon in $\RP^n$. Define $P_k$ to be the hyperplane in $\RP^n$ containing the vertices:
 \begin{equation} \label{planeeven}
  x_{k-2s+1}, x_{k-2s+3}, \dots, x_{k-3}, x_{k-1}, x_{k+1}, x_{k+3}, \dots,  x_{k+2s-1}
 \end{equation}
 if $n = 2s$ is even;
\begin{equation} \label{planeodd} 
x_{k-2s}, x_{k-2s+2}, \dots, x_{k-2}, x_k, x_{k+2}, \dots, x_{k+2s-2}, x_{k+2s}
\end{equation} 
if $n = 2s+1$ is odd.
 
We will  assume that the polygon has the analogous property to being convex as in \cite{OST}:   the vertices above ($n$ of them in each case) uniquely determine the plane for all $k$ and the intersection of $n$ consecutive $P_k$'s determine a unique point in $\RP^n$. If the polygon has this property, and following \cite{KS}, we define the {\it generalized pentagram map} as
 \begin{equation}\label{pentaeven}
 T(x_k) = P_{k-s+1} \cap P_{k-s+2}\cap P_{k-s+3}\cap\dots\cap P_{k-1}\cap P_k \cap P_{k+1}\cap \dots\cap P_{k+s}
 \end{equation}
 if $n = 2s$ is even;
 \begin{equation}\label{pentaodd}
 T(x_k) = P_{k-s} \cap P_{k-s+1}\cap P_{k-s+3}\cap\dots\cap P_{k-1}\cap P_k \cap P_{k+1}\cap \dots\cap P_{k+s}
 \end{equation}
 if $n = 2s+1$ is odd.
 
Our approach to finding a scaling of $a_k^i$ preserved by $T$ relies on being able to write this map on the discrete projective invariants $a_k^i$ given in (\ref{projectivemcm}), expressing  it as solution of a linear system of equations. We proceed to do that next. 
Let $V_k$ be the lifted vertex as in (\ref{projectivemf}). We can describe $T(V_k)$ as follows:

\subsubsection{The $n = 2s$  case}

Since $T(x_k)$ is the intersection of hyperplanes, $T(V_k)$ will be the intersection of the lifted hyperplanes in $\R^{n+1}$. If $P_k$ is given as in (\ref{planeeven}), then its lifted one, we will call it $\Pi_k$, is spanned by the lifts of the corresponding vertices. That is, $\Pi_k$ is the subspace of $\R^n$ generated by the vectors
\begin{equation} \label{liftplaneeven}
  V_{k-2s+1}, V_{k-2s+3}, \dots, V_{k-3}, V_{k-1}, V_{k+1}, V_{k+3}, \dots,  V_{k+2s-1}.
 \end{equation}
 
 \begin{proposition} The lifted pentagram map $T(V_k)$ can be defined as the intersection of two $(s+1)$-dimensional subspaces in $\R^{2s+1}$, spanned by the vectors \[\{V_{k-s}, V_{k-s+2}, V_{k-s+4}, \dots, V_{k+s}\}\] and the vectors \[\{V_{k-s+1}, V_{k-s+3}, V_{k-s+5}, \dots, V_{k+s+1}\},\] respectively.
 \end{proposition}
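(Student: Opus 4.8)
The plan is to replace the intersection of the $2s$ lifted hyperplanes $\Pi_{k-s+1},\dots,\Pi_{k+s}$ that defines $T(V_k)$ by the intersection of the two $(s+1)$-dimensional subspaces
\[
A=\mathrm{span}\{V_{k-s},V_{k-s+2},\dots,V_{k+s}\},\qquad
B=\mathrm{span}\{V_{k-s+1},V_{k-s+3},\dots,V_{k+s+1}\},
\]
by proving $A\cap B=\bigcap_{j=k-s+1}^{k+s}\Pi_j$. The crucial structural feature is a parity phenomenon visible in (\ref{liftplaneeven}): $\Pi_j$ is spanned by the lifts $V_i$ with $i-j$ odd, so each hyperplane only involves indices of the parity opposite to $j$. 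The generators of $A$ all carry indices of parity $k+s$, while those of $B$ carry the opposite parity $k+s+1$; thus $A$ and $B$ are each adapted to exactly one of the two parity classes into which the $2s$ defining hyperplanes fall.

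First I would establish the containments $A\subseteq\Pi_j$ and $B\subseteq\Pi_j$ for the appropriate hyperplanes. This is elementary index bookkeeping: one checks that the index set $\{k-s,k-s+2,\dots,k+s\}$ of the generators of $A$ sits inside the index set $\{j-2s+1,j-2s+3,\dots,j+2s-1\}$ of the generators of $\Pi_j$. Matching parities forces $j\equiv k+s+1\pmod 2$, and comparing the two ranges shows the inclusion holds precisely for the $s$ hyperplanes $j=k-s+1,k-s+3,\dots,k+s-1$. Symmetrically, $B\subseteq\Pi_j$ for the complementary $s$ hyperplanes $j=k-s+2,k-s+4,\dots,k+s$. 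These two families are disjoint and together exhaust every index in the range $[k-s+1,k+s]$, so each of the $2s$ defining hyperplanes contains either $A$ or $B$.

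The inclusion $A\cap B\subseteq T(V_k)$ is then immediate: for each $j$ in the first family $A\cap B\subseteq A\subseteq\Pi_j$, and for each $j$ in the second $A\cap B\subseteq B\subseteq\Pi_j$, so $A\cap B\subseteq\bigcap_{j=k-s+1}^{k+s}\Pi_j=T(V_k)$. To upgrade this to equality I would invoke a dimension count together with the non-degeneracy hypotheses already assumed for the map. Under those hypotheses $\bigcap_j\Pi_j$ is the one-dimensional lift of the single point $T(x_k)$, while $\dim(A\cap B)\ge\dim A+\dim B-(2s+1)=(s+1)+(s+1)-(2s+1)=1$. A subspace of dimension at least one contained in a one-dimensional subspace must coincide with it, so $A\cap B=T(V_k)$, as claimed.

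The only genuine work lies in the index arithmetic of the second paragraph---verifying that the parity and range conditions select exactly two complementary families of $s$ hyperplanes each, and that together they account for all $2s$ of them. Everything else is generic linear algebra, so beyond making this combinatorial bookkeeping precise I anticipate no real obstacle.
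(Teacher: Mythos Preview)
Your proposal is correct and follows essentially the same argument as the paper: both split the $2s$ hyperplanes into two parity families of $s$ each and use dimension counting. The only organizational difference is that the paper computes the partial intersections $\bigcap(\text{family }1)=A$ and $\bigcap(\text{family }2)=B$ exactly (via the same index bookkeeping you outline) and then intersects, whereas you record only the containments $A\subseteq\Pi_j$, $B\subseteq\Pi_j$ and close with a single Grassmann-type dimension bound; the substance is the same.
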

 \begin{proof}
Let us select every other hyperplane and divide the complete set of $2s$ subspaces \[\{\Pi_{k-s+1}, \Pi_{k-s+2}, \dots, \Pi_{k+s}\}\] into two subsets  of $s$ hyperplanes each \[
\{\Pi_{k-s+1}, \Pi_{k-s+3}, \dots,  \Pi_{k+s-1}\} ~~ \mathrm{and}~~ \{\Pi_{k-s+2}, \Pi_{k-s+4}, \dots, \Pi_{k+s}\}.\] It is not too hard to see that the hyperplanes in each one of these two subsets intersect in an $s$-dimensional subspace. Indeed, $\Pi_{k-s+1}$ is generated by the vectors
\[
V_{k-3s+2}, V_{k-3s+4}, \dots, V_{k+s}
\]
while $\Pi_{k-s+3}$ will have these shifted twice to the right. We keep on shifting until we get to the last subspace in this group which has generators
\[
V_{k-s}, V_{k-s+2}, \dots, V_{k+3s-2}.
\]
Therefore, the intersection of the subspaces in this set contains $\{V_{k-s}, V_{k-s+2}, \dots, V_{k-s}\}$, with a total of $s+1$ vectors. Dimension counting tells us that the intersection is indeed spanned by them. Similarly one concludes that the set $  \{\Pi_{k-s+2}, \Pi_{k-s+4}, \dots, \Pi_{k+s}\}$ intersects at $\{V_{k-s+1}, V_{k-s+3}, V_{k-s+5}, \dots, V_{k+s+1}\}$, also of dimension $s+1$. Since the intersection of these two subspaces is $1$ dimensional, the intersection of the $2s$ hyperplanes equals the intersection of these two subspaces, as stated.
\end{proof}

Using this proposition we can describe $T(V_k)$ explicitly.
\begin{proposition}
If $n=2s$, the lifted pentagram map is given by
\[
T(V_k) = \lambda_{k-s} \rho_{k-s} \r_{k-s}
\]
where 
\begin{equation}\label{reven}
\r_k^T = \begin{pmatrix}1& 0& a_k^2& 0& a_k^4& 0& \dots& 0& a_k^{2s}\end{pmatrix},
\end{equation}
and where $\lambda_k$ is uniquely determined by condition (\ref{norm}) with $\w_k = \lambda_k\r_k$.\end{proposition}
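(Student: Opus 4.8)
The plan is to produce one explicit vector lying in \emph{both} of the $(s+1)$-dimensional subspaces furnished by the previous proposition, and then to identify it with $T(V_k)$ by appealing to the one-dimensionality of their intersection. By that proposition, $T(V_k)$ spans the intersection of $A = \mathrm{span}\{V_{k-s}, V_{k-s+2}, \dots, V_{k+s}\}$ and $B = \mathrm{span}\{V_{k-s+1}, V_{k-s+3}, \dots, V_{k+s+1}\}$. Since the columns of $\rho_{k-s}$ are $V_{k-s}, V_{k-s+1}, \dots, V_{k+s}$, reading off the coefficients in (\ref{reven}) gives
\[
W := \rho_{k-s}\r_{k-s} = V_{k-s} + a_{k-s}^2 V_{k-s+2} + a_{k-s}^4 V_{k-s+4} + \dots + a_{k-s}^{2s} V_{k+s},
\]
which is visibly a combination of the even-shifted vectors, so $W \in A$ automatically.

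The crux is to show $W \in B$ as well, and for this I would invoke the Wilczynski recursion (\ref{Vinv}) evaluated at index $k-s$ with $n = 2s$ even, so that $(-1)^n = 1$:
\[
V_{k+s+1} = a_{k-s}^{2s} V_{k+s} + a_{k-s}^{2s-1}V_{k+s-1} + \dots + a_{k-s}^1 V_{k-s+1} + V_{k-s}.
\]
Splitting the right-hand side into its even- and odd-shifted terms, the even-shifted part together with the trailing $V_{k-s}$ is exactly $W$, so the identity rearranges to
\[
W = V_{k+s+1} - \left(a_{k-s}^1 V_{k-s+1} + a_{k-s}^3 V_{k-s+3} + \dots + a_{k-s}^{2s-1}V_{k+s-1}\right),
\]
exhibiting $W$ as a combination of $V_{k-s+1}, V_{k-s+3}, \dots, V_{k+s-1}, V_{k+s+1}$, i.e. $W \in B$. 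Hence $W \in A \cap B$; its $V_{k-s}$-component is $1$, so $W \neq 0$, and since $A \cap B$ is one-dimensional we get $A \cap B = \mathrm{span}(W)$. Therefore $T(V_k)$, the lift of the intersection point, is a scalar multiple of $W$; writing that scalar as $\lambda_{k-s}$ yields $T(V_k) = \lambda_{k-s}\rho_{k-s}\r_{k-s}$, with $\lambda_{k-s}$ then fixed by the normalization (\ref{norm}) applied to $\w_k = \lambda_k\r_k$.

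The computation is short once the recursion is evaluated at the correct index; the only genuine subtlety, and the step I would verify carefully, is the bookkeeping that makes the even-shifted half of the recursion (plus the endpoint $V_{k-s}$) coincide precisely with $W$. This is exactly where the coefficient pattern $(1,0,a^2,0,a^4,\dots)$ of $\r_k$ originates: it is forced by demanding that the even-shifted combination also land in the odd-shifted span, and the recursion (\ref{Vinv}) is precisely what converts one span into the other. For that reason I would present the recursion first and let the formula for $\r_k$ emerge from it, rather than positing $\r_k$ and checking it after the fact.
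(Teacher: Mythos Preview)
Your proof is correct and is essentially the same as the paper's: both hinge on applying the recursion (\ref{Vinv}) at index $k-s$ to convert between the even-shifted span $A$ and the odd-shifted span $B$, and both conclude via the one-dimensionality of $A\cap B$. The only difference is presentational---the paper writes $T(V_k)$ in both spans with unknown coefficients and equates them in the basis $V_{k-s},\dots,V_{k+s}$ (which forces $\alpha_k^j=\beta_k^{s+1}a_{k-s}^{2(j-1)}$ and thereby derives $\r_{k-s}$), whereas you posit $W=\rho_{k-s}\r_{k-s}$ and verify membership in both spans; your own final paragraph already notes that the derivation order could be flipped to match the paper's.
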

\begin{proof}
From our previous proposition we know that $T(V_k)$ can be written as linear combinations of both $\{V_{k-s}, V_{k-s+2}, \dots, V_{k+s}\}$ and $\{V_{k-s+1}, V_{k-s+3}, V_{k-s+5}, \dots, V_{k+s+1}\}$. But since $V_{k-s}, V_{k-s+1}, \dots, V_{k+s}$ form a basis for $\R^{2s+1}$, and given (\ref{Vinv}) we have that there exist invariants $\alpha_i^j$ and $\beta_i^j$ such that
\begin{eqnarray*}
T(V_k) &=& \alpha_k^1 V_{k-s} + \alpha_k^2V_{k-s+2}+\dots +\alpha_k^{s+1} V_{k+s} \\ &=& \beta_k^1V_{k-s+1}+\beta_k^2 V_{k-s+3}+\dots+\beta_k^sV_{k+s-1} \\&+& \beta_k^{s+1}\left(a_{k-s}^{2s}V_{k+s}+a_{k-s}^{2s-1}V_{k+s-1}+\dots+a_{k-s}^1V_{k-s+1} + V_{k-s}\right).
\end{eqnarray*}
 Equating the coefficients in this basis and choosing $\lambda_{k-s} = \beta_k^{s+1}$ we obtain the first part of the proposition.  The second part is immediate since $T$ is lifted from a map in projective space, hence the lift is well defined whenever it satisfies (\ref{norm}) for all $k$. That is
 \begin{equation}\label{lambdaeq}
 \lambda_k\lambda_{k+1} \dots \lambda_{k+2s}\det(\rho_k\r_k, \rho_{k+1}\r_{k+1}, \dots, \rho_{k+2s}\r_{k+2s}) = 1.
 \end{equation}
 As before, this equation has a unique solution for $\lambda_k$ whenever $N$ and $n+1 = 2s+1$ are co-primes.
\end{proof}
The elements $\lambda_k$ are the same as those found in \cite{KS} through a different formulation. \subsubsection{The case $n=2s+1$}
In the odd dimensional case the results are similar, with different choices of vertices and hyperplanes. Since the proofs of the propositions are identical we will omit them. 
 If $P_k$ is given as in (\ref{planeodd}), then $\Pi_k$ is the subspace of $\R^n$ generated by the vectors
\begin{equation} \label{liftplaneodd}
  V_{k-2s}, V_{k-2s+2}, \dots, V_{k-2}, V_{k}, V_{k+2}, \dots, V_{k+2s-2}, V_{k+2s}.
 \end{equation}
 First we describe the two subspaces intersecting in $T(V_k)$.
 \begin{proposition} The lifted pentagram map $T(V_k)$ can be defined as the intersection of one $s+1$-dimensional and one $s+2$-dimensional subspaces in $\R^{2s+2}$, spanned by the vectors \[\{V_{k-s}, V_{k-s+2}, V_{k-s+4}, \dots, V_{k+s}\}\] and the vectors \[\{V_{k-s-1}, V_{k-s+1}, V_{k-s+3}, \dots, V_{k+s+1}\},\] respectively.
 \end{proposition}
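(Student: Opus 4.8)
The plan is to run the even-dimensional argument essentially verbatim, replacing the number $2s$ of hyperplanes by $2s+1$ and the ambient space $\R^{2s+1}$ by $\R^{2s+2}$, but keeping careful track of the fact that the two halves will now have different sizes. First I would partition the $2s+1$ consecutive lifted hyperplanes $\{\Pi_{k-s},\Pi_{k-s+1},\dots,\Pi_{k+s}\}$ according to the parity of the index. Writing a member as $\Pi_{k+j}$ and sorting by whether $j\equiv s$ or $j\equiv s+1 \pmod 2$ produces one family of $s+1$ hyperplanes, $\{\Pi_{k-s},\Pi_{k-s+2},\dots,\Pi_{k+s}\}$, and a second family of $s$ hyperplanes, $\{\Pi_{k-s+1},\Pi_{k-s+3},\dots,\Pi_{k+s-1}\}$. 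Since the intersection of all $2s+1$ hyperplanes is the intersection of the two partial intersections, it suffices to identify each of these two.

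Second, I would read the partial intersections off the generators in (\ref{liftplaneodd}). The spanning set of $\Pi_{k+j}$ is $V_{k+j-2s},V_{k+j-2s+2},\dots,V_{k+j+2s}$, so every member of a fixed family consists of lifts $V_m$ of a single fixed parity of $m$, and advancing $j$ by $2$ merely slides this index window to the right by $2$. Hence the generators common to an entire family are exactly those whose index lies in the overlap of all the windows of that family. For the first family the overlap runs from $k-s$ to $k+s$, giving the $s+1$ vectors $V_{k-s},V_{k-s+2},\dots,V_{k+s}$; for the second family it runs from $k-s-1$ to $k+s+1$, giving the $s+2$ vectors $V_{k-s-1},V_{k-s+1},\dots,V_{k+s+1}$. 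These are precisely the two spanning sets in the statement.

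Third comes the dimension count, which is where genericity enters. Under the standing non-degeneracy hypothesis the $s+1$, respectively $s$, hyperplanes of each family meet transversally in $\R^{2s+2}$, so their intersections have dimension exactly $(2s+2)-(s+1)=s+1$ and $(2s+2)-s=s+2$; meanwhile the common generators are independent, so their spans are $(s+1)$- and $(s+2)$-dimensional. As each span is contained in the corresponding intersection and has equal dimension, span and intersection coincide, yielding exactly the two subspaces claimed. Finally, these two subspaces again meet transversally, so their intersection has the expected dimension $(s+1)+(s+2)-(2s+2)=1$, and this line is $T(V_k)$.

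I expect the principal difficulty to be bookkeeping rather than anything conceptual: one must keep the parity conventions and the window endpoints straight so that the two overlaps come out to $s+1$ and $s+2$ and not off by one, since the $n=2s+1$ case is genuinely asymmetric, unlike the even case where both families had equal size. The one substantive point, exactly as in the even case, is that the independence of the selected non-consecutive lifts and the transversality of the hyperplanes do not follow from $\det\rho_k=1$ alone; they are furnished by the non-degeneracy assumption, and it is precisely this assumption that promotes the a priori lower bounds on the dimensions to the equalities needed to pin down the two subspaces.
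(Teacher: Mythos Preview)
Your proof is correct and follows essentially the same approach as the paper, which explicitly omits the odd-case argument and says ``the proofs of the propositions are identical'' to the even case. Your partition of the $2s+1$ hyperplanes into families of sizes $s+1$ and $s$, the sliding-window identification of the common generators, and the dimension count matching span to intersection are exactly the adaptation the paper has in mind.
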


Next we write the lift of the pentagram map in terms of the moving frame.
\begin{proposition}
If $n=2s+1$, the lifted pentagram map is given by
\[
T(V_k) = \lambda_{k-s} \rho_{k-s} \r_{k-s}
\]
where 
\begin{equation}\label{rodd}
\r_k^T = \begin{pmatrix} 0&  a_k^1& 0& a_k^3& 0& \dots& 0& a_k^{2s+1}\end{pmatrix},
\end{equation}
and where $\lambda_k$ is uniquely determined by condition (\ref{norm}) with $\w_k = \lambda_k\r_k$.\end{proposition}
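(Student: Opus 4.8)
The plan is to imitate, step for step, the argument already carried out in the even case, since by the preceding proposition the geometry is the same up to the asymmetry between the two subspaces. I would begin by invoking that proposition, which realizes $T(V_k)$ as the unique projective point in the intersection of the $(s+1)$-dimensional subspace $\mathrm{span}\{V_{k-s}, V_{k-s+2}, \dots, V_{k+s}\}$ with the $(s+2)$-dimensional subspace $\mathrm{span}\{V_{k-s-1}, V_{k-s+1}, \dots, V_{k+s+1}\}$. Membership in each subspace produces two expansions of the lifted vector,
\[
T(V_k) = \sum_{i=0}^{s}\alpha_k^{i+1} V_{k-s+2i} = \sum_{j=0}^{s+1}\beta_k^{j+1} V_{k-s-1+2j},
\]
with invariant coefficients $\alpha_k^i,\beta_k^j$. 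Note that the first sum already lies in the span of the columns of $\rho_{k-s} = (V_{k-s}, \dots, V_{k+s+1})$, so it is through this representation that the final form $T(V_k) = \lambda_{k-s}\rho_{k-s}\r_{k-s}$ will emerge.

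The key algebraic input is the Serret--Frenet recursion (\ref{Vinv}), which furnishes the single linear relation among the $2s+3$ vectors $V_{k-s-1},\dots,V_{k+s+1}$ spanning $\R^{2s+2}$. I would use it to rewrite the one ``overflow'' vector of the second expansion (the vector lying outside the chosen common basis) in terms of that basis, exactly as $V_{k+s+1}$ was eliminated in the even case. Substituting and then equating the two expansions entry by entry reduces the statement to a finite system of scalar identities relating the $\alpha_k^i$, the $\beta_k^j$, and the Maurer--Cartan entries of (\ref{projectivemcm}). This equality splits by parity: the positions belonging to one subspace fix the surviving coefficients as prescribed multiples of the Maurer--Cartan entries, while the complementary positions are automatically consistent. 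Collecting these multiples and factoring out a single overall scalar yields $T(V_k) = \lambda_{k-s}\rho_{k-s}\r_{k-s}$ with $\r_{k-s}$ as displayed, the odd-superscript entries $a^1,a^3,\dots,a^{2s+1}$ being precisely those that survive.

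Finally, as in the even case, the normalization $\lambda_k$ is pinned down not by this linear step but by imposing that $T$ descends from a projective map, i.e. the unimodularity condition (\ref{norm}). With $\w_k=\lambda_k\r_k$ this becomes the scalar equation (\ref{lambdaeq}), a cyclic product condition on $\lambda_k,\lambda_{k+1},\dots$ that admits a unique solution whenever $N$ and $n+1 = 2s+2$ are co-prime, exactly the hypothesis already used to guarantee existence and uniqueness of the lift (\ref{projectivemf}).

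The step I expect to be the main obstacle is bookkeeping rather than conceptual. Because the two subspaces now carry unequal dimensions $s+1$ and $s+2$, the recursion (\ref{Vinv}) enters the two expansions asymmetrically, so one must keep careful track of which frame index and which parity of superscript on the $a$'s actually persist through the elimination. Getting that index-and-parity accounting exactly right---so that the surviving coefficients are the odd-superscript entries at the correct frame index and no spurious boundary term $V_{k-s-1}$ remains---is the only delicate point; once it is settled, the remainder is a direct transcription of the even-dimensional proof.
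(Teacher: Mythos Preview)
Your proposal is correct and follows exactly the approach the paper intends: the paper explicitly omits the odd-case proof with the remark ``Since the proofs of the propositions are identical we will omit them,'' and what you have sketched is precisely the verbatim transcription of the even-case argument (two expansions in the respective subspaces, elimination of the single overflow vector via the recursion (\ref{Vinv}), parity-by-parity matching of coefficients in a common frame, and normalization by (\ref{norm})/(\ref{lambdaeq})). Your closing remark that the only subtlety is the index-and-parity bookkeeping arising from the unequal dimensions $s+1$ and $s+2$ is apt and is indeed the whole content of the omitted proof.
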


\vskip 2ex

From now on, and for simplicity's sake, we will shift the pentagram map and redefine it as $\Sh^s T$ where $\Sh$ is the shift map, $\Sh(x_k) = x_{k+1}$  extended to functions of $x$ via the pullback and to vectors and matrices applying it to every entry. With this $T(V_k) = \lambda_k\rho_k\r_k$. Since the scaling will be independent of $k$, this will produce no trouble in what follows.

To end this section we will show that, in both odd and even dimensional cases, we can rewrite the map on the invariants as a solution of a linear systems of equations. Indeed, from (\ref{N-1})
\begin{equation}\label{N0}
N_k= \begin{pmatrix}\lambda_k\r_k& \lambda_{k+1}K_k \r_{k+1}& \lambda_{k+2}K_kK_{k+1}\r_{k+2}& \dots& \lambda_{k+n} K_kK_{k+1}\dots K_{k+n-1}\r_{k+n}\end{pmatrix}
\end{equation}
Also, using (\ref{structure}), we have that
\[
N_k T(K_k) = K_kN_{k+1} = K_k\left(\lambda_{k+1}\r_{k+1}, \lambda_{k+2}K_{k+1}\r_{k+2}, \dots, \lambda_{k+n+1}K_{k+1}\dots K_{k+n}\r_{k+n+1}\right).
\]
Let us call $F_k = \r_k$ and 
\begin{equation}\label{F}
F_{k+s} = K_k\dots K_{k+s-1}\r_{k+s},
\end{equation}
for $s=1,2\dots$. Then the above can be written as
 \[
 N_k T(K_k) = (\lambda_{k+1}F_{k+1},\lambda_{k+2}F_{k+2}, \dots, \lambda_{k+n+1} F_{k+n+1}).
 \]
Since the last column of $N_kT(K_k)$ is given by $N_k\begin{pmatrix} 1\\ T(\ab_k)\end{pmatrix}$, with $\ab_k$ indicating the vectors of invariants, we have that $T(\ab_k)$ is the unique solution to the equation

\begin{equation}\label{main}
N_k \begin{pmatrix} 1\\ T(\ab_k)\end{pmatrix} = \lambda_{k+n+1}F_{k+n+1}.
\end{equation}

Studying the scaling invariance of a solution of a linear system of equations is equivalent to studying the invariance of the determinants appearing in Cramer's rule. This is what we will do in our next section.
\section{Scaling invariance of the generalized pentagram map}\label{AGD}

In this section we also need to separate the odd and even cases. The scaling and proof in the odd case is simpler, so we will present it first. 

If a function is homogeneous of degree $r$ under a scaling we say $d(f) = r$. Likewise for vectors and matrices whose entries all have the same degree. If entries of a vector $\ve$ are homogeneous with different degrees, we will write $d(\ve)$ as a vector of numbers, each one the degree of the corresponding entry.
\subsection{The case of $\RP^{2s+1}$}

Assume $n = 2s+1$ and consider the scaling
\begin{equation}\label{scalodd}
a_k^{2\ell+1} \to t a_k^{2\ell+1} \hskip 2ex  \hskip 4ex a_k^{2\ell} \to a_k^{2\ell}, \hskip 2ex \ell =1, \dots s.
\end{equation}
This scaling appeared in \cite{KS} where the authors conjectured that it left the pentagram map invariant, a fact they proved for $s=1$ and checked for $s=2$ with the aid of a computer. 

Before we start we will prove a simple but fundamental lemma.
\begin{lemma} Let $F_{k+i}$ be defined as in (\ref{F}). Then 
\begin{equation}\label{induction}
F_{k+2\ell} = \sum_{r=1}^\ell \alpha_{2r-1}^{2\ell} F_{k+2r-1} + G_{k+2\ell}, \hskip 2ex F_{k+2\ell+1} = \sum_{r=0}^\ell\alpha_{2r}^{2\ell+1} F_{k+2r} + \wG_{k+2\ell+1}
\end{equation}
for $\ell = 1,2,\dots$, where
\begin{equation}\label{alpha1}
\wG_{k+2\ell+1} = \left(\Sh G_{k+2\ell}\right)_{n+1}\p_k+ \left[\Sh G_{k+2\ell}\right]^1,\hskip 2ex
\alpha_{2r}^{2\ell+1} = \Sh\alpha_{2r-1}^{2\ell}, \hskip 1ex \alpha_0^{2\ell+1} = \left(\Sh G_{k+2\ell}\right)_{n+1}.
\end{equation}
and 
\begin{equation}\label{alpha2} G_{k+2\ell+2} = \left[\Sh \wG_{k+2\ell+1}\right]^1, \hskip 2ex \alpha_{2r+1}^{2\ell+2} = \Sh\alpha_{2r}^{2\ell+1}.
\end{equation}
We denote by $(~)_{n+1}$ the last entry of a vector and $[~]^1$ indicates that the vectors' entries have been shifted downwards once and a zero has been added in the first entry. 
\end{lemma}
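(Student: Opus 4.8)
The plan is to run a simultaneous induction on $\ell$, alternating between the even and the odd identity, driven by a single structural feature of the Maurer--Cartan matrix. Since $K_k$ in (\ref{projectivemcm}) is a companion-type matrix --- the subdiagonal carries $1$'s and the invariants sit in the last column --- its action on an arbitrary $\ve\in\R^{n+1}$ splits cleanly as $K_k\ve = [\ve]^1 + (\ve)_{n+1}\,\p_k$, where $\p_k$ is the last column of $K_k$ and $[\,\cdot\,]^1$ is the shift-down operation of the statement. This one identity is the engine of the whole computation: it converts multiplication by $K_k$ into a shift-down plus a single rank-one correction along $\p_k$ weighted by the last entry of the input. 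I would record it first, together with the elementary consequence of (\ref{F}) that $F_{k+i+1} = K_k\,\Sh F_{k+i}$, which lets one climb the index $i$ one unit at a time by shifting and left-multiplying by $K_k$.

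With these two facts the induction essentially writes itself. The base case $\ell=0$ is $F_k = G_k = \r_k$ (empty sum) together with $F_{k+1} = K_k\r_{k+1} = [\r_{k+1}]^1 + (\r_{k+1})_{n+1}\,\p_k$, from which one reads off $\alpha_0^1$ and $\wG_{k+1}$. For the step from the even identity to the odd one I would apply $\Sh$ to $F_{k+2\ell} = \sum_{r=1}^\ell \alpha_{2r-1}^{2\ell}F_{k+2r-1} + G_{k+2\ell}$ --- which replaces every coefficient $\alpha$ by $\Sh\alpha$ and every $F_{k+2r-1}$ by $\Sh F_{k+2r-1}$ --- and then left-multiply by $K_k$. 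Each term $K_k\,\Sh F_{k+2r-1}$ collapses to $F_{k+2r}$ by the climbing relation, which is exactly why $\alpha_{2r}^{2\ell+1} = \Sh\alpha_{2r-1}^{2\ell}$; meanwhile the remainder $K_k\,\Sh G_{k+2\ell}$ splits, by the structural identity, into the shift-down piece $[\Sh G_{k+2\ell}]^1$ and the $\p_k$-correction $(\Sh G_{k+2\ell})_{n+1}\p_k$, which together form $\wG_{k+2\ell+1}$, with the last entry $(\Sh G_{k+2\ell})_{n+1}$ surfacing as the new lowest coefficient $\alpha_0^{2\ell+1}$. The step from the odd identity to the even one is word-for-word the same computation, producing $\alpha_{2r+1}^{2\ell+2} = \Sh\alpha_{2r}^{2\ell+1}$ and $G_{k+2\ell+2} = [\Sh\wG_{k+2\ell+1}]^1$.

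The routine part is the algebra; the delicate part is the bookkeeping. As the parity flips, the index range of the sum shifts (the even identity runs $r=1,\dots,\ell$ and lands on odd-indexed $F$'s, the odd identity runs $r=0,\dots,\ell$ and lands on even-indexed $F$'s), and one must track exactly this reindexing to see that each newly produced coefficient is precisely the $\Sh$ of the previous one. The genuinely subtle point is that a decomposition of a vector as $\sum \alpha F + (\text{remainder})$ is not unique, so the content of the lemma is really the particular normalization: which scalar is peeled off as the new lowest coefficient $\alpha_0$ and which shift-down term is retained inside $G$ or $\wG$. I would pin this normalization down by computing $\ell=0$ and $\ell=1$ explicitly, use them to fix the convention, and only then let the induction propagate it; the structural identity $K_k\ve = [\ve]^1 + (\ve)_{n+1}\p_k$ guarantees that no term outside a shift-down and a $\p_k$-multiple can ever appear, so once the low cases are checked the general inductive step is forced.
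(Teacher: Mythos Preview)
Your inductive scheme is the paper's own, but there is a slip that breaks the bookkeeping as written. You set $\p_k$ equal to the \emph{full} last column of $K_k$, so that $K_k\ve=[\ve]^1+(\ve)_{n+1}\p_k$; with that convention the entire remainder $K_k\,\Sh G_{k+2\ell}$ is absorbed into $\wG_{k+2\ell+1}$ and nothing is left over to be $\alpha_0^{2\ell+1}F_k$, so the coefficient cannot ``surface'' as you claim. The lemma (and the paper) instead take $\p_k$ so that $\p_k+\r_k$ is the last column of $K_k$, whence $K_k\ve=[\ve]^1+(\ve)_{n+1}(\p_k+\r_k)$. Since $\r_k=F_k$, the piece $(\Sh G_{k+2\ell})_{n+1}\,\r_k$ is precisely the new term $\alpha_0^{2\ell+1}F_k$, while the rest, $(\Sh G_{k+2\ell})_{n+1}\p_k+[\Sh G_{k+2\ell}]^1$, is $\wG_{k+2\ell+1}$. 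That split is not a convention one chooses after the fact; it is forced by the formula in the statement.

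This same split also explains why the odd-to-even step is \emph{not} word-for-word the same. With the correct $\p_k$ one checks inductively that the nonzero entries of $G_{k+2\ell}$ and $\wG_{k+2\ell+1}$ alternate in complementary parities; in particular $\wG_{k+2\ell+1}$ has last entry zero. Hence $K_k\,\Sh\wG_{k+2\ell+1}=[\Sh\wG_{k+2\ell+1}]^1$ with no $\r_k$-correction at all, so no new lowest-index coefficient is created and the even sum still starts at $r=1$. Once you correct the definition of $\p_k$ and record this parity observation, your induction goes through exactly as in the paper.
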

By a hat we denote vectors whose last entry vanishes, while an absence of hat means the last entry is not zero. In both cases the entries vanish alternatively, so non-hat vectors have zero odd entries and hat vectors have zero even entries .  For simplicity we will drop the $k$ subindex in proofs and denote by $K_r$ the matrix $K_{k+r}$. Likewise with the other elements in the calculation. We will only introduce the subindex $k$ if its omission might create confusion.
\begin{proof} Recall that $\r$ is given by (\ref{rodd}). Denote by $\p$ the vector $\p = (-1,0,a^2,0\dots,  a^{2s},0)^T$ so that the last column of $K$ in (\ref{projectivemcm})  is given by $\p+\r$. Notice also that from the definition of $F$ in (\ref{F}), 
\[
F_\ell = K \Sh F_{\ell-1}.
\]
Using this, we have
\[
F_1 = K \r_1 = a_1^{2s+1}(\p+\r) + [\r_1]^1 = a_1^{2s+1}F+a_1^{2s+1}\p+[\r_1]^1
\]
We call $\wG_1 = a_1^{2s+1}\p+[\r_1]^1$. Next
\[
F_2 = K\Sh F_1 = K(a_2^{2s+1}\Sh F + \Sh \wG_1) = a_2^{2s+1} F_1 + \left[\Sh \wG_1\right]^1
\]
and we call $G_2 =  \left[\Sh \wG_1\right]^1$.  Assume that
\[
F_{2\ell} = \sum_{r=1}^\ell \alpha_{2r-1}^{2\ell} F_{2r-1} + G_{2\ell}.
\]
Then, 
\[
F_{2\ell+1} = K\Sh F_{2\ell} = \sum_{r=1}^\ell \Sh \alpha_{2r-1}^{2\ell} K\Sh F_{2r-1} + K \Sh G_{2\ell}.
\]
Since $K \Sh G_{2\ell} = \left(\Sh G_{2\ell}\right)_{n+1}\p + \left(\Sh G_{2\ell}\right)_{n+1}\r + \left[\Sh G_{2\ell}\right]^1$ and $\r = F$, we have
\[
F_{2\ell+1} =  \sum_{r=0}^\ell\alpha_{2r}^{2\ell+1} F_{2r} + \wG_{2\ell+1}
\]
with 
\[
\wG_{2\ell+1} = \left(\Sh G_{2\ell}\right)_{n+1}\p+ \left[\Sh G_{2\ell}\right]^1,\hskip 2ex
\alpha_{2r}^{2\ell+1} = \Sh\alpha_{2r-1}^{2\ell}, r=1,\dots, \ell; \hskip 1ex \alpha_0^{2\ell+1} = \left(\Sh G_{2\ell}\right)_{n+1}.
\]
Going further one step we have
\[
F_{2\ell+2} = K\Sh F_{2\ell+1} = \sum_{r=0}^\ell \Sh\alpha_{2r}^{2\ell+1} K\Sh F_{2r} + K\Sh \wG_{2\ell+1}
\]
and since $K\Sh F_{2r} = F_{2r+1}$ and $K\Sh \wG_{2\ell+1} = \left[\Sh \wG_{2\ell+1}\right]^1$, calling 
\[ G_{2\ell+2} = \left[\Sh \wG_{2\ell+1}\right]^1 \hskip 2ex \mathrm{and}\hskip 2ex \alpha_{2r+1}^{2\ell+2} = \Sh\alpha_{2r}^{2\ell+1}
\]
concludes the proof of the lemma.
\end{proof}

\begin{theorem} The determinant  
\begin{equation}\label{N2}
D_k = \det\begin{pmatrix}F_k& F_{k+1}& F_{k+2}& \dots& F_{k+2s+1}\end{pmatrix}
\end{equation}
where $F_k$ is given as in (\ref{F}) and $\r_k$ is given as in (\ref{rodd}), is homogeneous under the scaling (\ref{scalodd}), with $d(D_k) = 2s+2$ for all $k$. Furthermore, if $\lambda_k$ is the solution of (\ref{lambdaeq}), then $\lambda_k$ is also homogeneous with $d(\lambda_k) = -1$ for all $k$.
\end{theorem}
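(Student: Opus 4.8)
The plan is to use the Lemma to turn the entangled determinant (\ref{N2}) into a determinant whose columns are the homogeneous vectors $\r$, $G_{k+2\ell}$ and $\wG_{k+2\ell+1}$, and then to read off $d(D_k)$ as the sum of the column degrees. First I would perform a column reduction on $(F_k, F_{k+1}, \dots, F_{k+2s+1})$. Each identity in (\ref{induction}) says precisely that, for $j\ge 1$, the column $F_{k+j}$ differs from $G_{k+j}$ (if $j$ is even) or from $\wG_{k+j}$ (if $j$ is odd) by a linear combination of the earlier columns $F_{k+i}$, $i<j$; the case $j=1$ is $F_{k+1}=\alpha_0^1 F_k+\wG_{k+1}$, and $F_k=\r_k$ is left untouched. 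Collecting these relations as $M_H = M_F(I-L)$, where $L$ is strictly upper triangular and $M_H$ has columns $\r_k, \wG_{k+1}, G_{k+2}, \wG_{k+3}, \dots, G_{k+2s}, \wG_{k+2s+1}$, the unipotence of $I-L$ gives $D_k=\det M_H$ with the underlying polynomial unchanged. This is the conceptual core: it trades the entangled $F_{k+j}$ for the clean, alternately hat / non-hat columns of $M_H$.

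Next I would show that every column of $M_H$ is homogeneous of degree $1$ under (\ref{scalodd}), arguing by induction through (\ref{alpha1})--(\ref{alpha2}). The base cases are direct: the nonzero entries of $\r_k$ are the odd invariants $a_k^1,\dots,a_k^{2s+1}$, all of degree $1$, while the nonzero entries of $\p$ are $-1, a_k^2,\dots,a_k^{2s}$, all of degree $0$; hence $\wG_{k+1}=a_{k+1}^{2s+1}\p+[\r_{k+1}]^1$ is homogeneous of degree $1$, and $G_{k+2}=[\Sh\wG_{k+1}]^1$ keeps degree $1$ since $\Sh$ and $[\,\cdot\,]^1$ only relocate entries. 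For the inductive step, if $G_{k+2\ell}$ has degree $1$ then its last entry $(\Sh G_{k+2\ell})_{n+1}$ has degree $1$, so multiplying the degree-$0$ vector $\p$ by it and adding the relocation $[\Sh G_{k+2\ell}]^1$ leaves $\wG_{k+2\ell+1}$ at degree $1$, and then $G_{k+2\ell+2}=[\Sh\wG_{k+2\ell+1}]^1$ is again degree $1$. Since a determinant whose columns are separately homogeneous of degrees $d_0,\dots,d_{2s+1}$ is homogeneous of degree $\sum_j d_j$, and here each $d_j=1$ across the $2s+2$ columns, I conclude $d(D_k)=2s+2$.

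For the scaling of $\lambda_k$ I would exploit the uniqueness of the solution of (\ref{lambdaeq}). Factoring $\rho_k$ out of the determinant there and using $\det\rho_k=1$ together with (\ref{F}) identifies that determinant with $D_k$, so the normalization reads $\lambda_k\lambda_{k+1}\cdots\lambda_{k+n}\,D_k=1$ with $n+1=2s+2$ factors. Replacing the invariants by their scalings multiplies $D_k$ by $t^{2s+2}$; substituting $\mu_{k}=t^{-1}\lambda_k$ into the scaled relation replaces the product $\lambda_k\cdots\lambda_{k+n}$ by $t^{-(2s+2)}\lambda_k\cdots\lambda_{k+n}$, and the two powers of $t$ cancel, so $\mu_k$ solves the scaled system. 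Uniqueness, which holds because $N$ and $2s+2$ are co-prime, then forces the scaled $\lambda_k$ to equal $t^{-1}\lambda_k$, that is $d(\lambda_k)=-1$, uniformly in $k$.

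The step needing the most care is the homogeneity induction: the final value $d(D_k)=2s+2$ rests on the degree remaining exactly $1$ at every stage, which in turn depends on the two facts $d(\p)=0$ and $d(\r)=1$ forced by the precise choice of scaling (\ref{scalodd}). It is this balance — the feedback of the last entry of $G_{k+2\ell}$ through the degree-$0$ vector $\p$ never inflating the degree — that makes the odd case clean, and I would verify carefully that no entry of any column of $M_H$ silently mixes degrees before summing, since an error here would propagate through the whole recursion.
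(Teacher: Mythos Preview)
Your proof is correct and follows essentially the same route as the paper: reduce $D_k$ via the Lemma to a determinant with columns $\r_k,\wG_{k+1},G_{k+2},\dots,\wG_{k+2s+1}$, verify by the same induction through (\ref{alpha1})--(\ref{alpha2}) that each column has degree $1$, and then invoke uniqueness of the solution to (\ref{lambdaeq}) for $\lambda_k$. Your write-up makes the column reduction explicit via the unipotent factor $I-L$, and you treat the $\lambda_k$ step by direct substitution $\mu_k=t^{-1}\lambda_k$ rather than passing to logarithms as the paper does, but these are cosmetic differences over the same argument.
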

\begin{proof}

We will first show that $d(G_{2\ell}) = d(\wG_{2\ell+1}) = 1$ for all $\ell$, where $G_{2\ell}$ and $\wG_{2\ell+1}$ are defined as in the previous lemma. (Notice that we are implying that zero has also degree $1$. In fact, zero has {\it any} degree and it is in that sense that we claim $d(G_{2\ell}) = d(\wG_{2\ell+1}) = 1$.) First of all $\wG_1 = a_i^{2s+1}\p+[r_1]^1$ with $d(\p) = 0$, $d(a_i^{2s+1}) = d(\r_1) = 1$. Furthermore $G_2 = \left[\Sh\wG_1\right]^1$. Therefore, $d(G_2) = d(\wG_1) = 1$. Since $d(G_{2\ell}) = d(\left[\Sh\wG_{2\ell-1}\right]^1)$ and $d(\wG_{2\ell+1}) = d((G_{2\ell})_{n+1}\p + \left[\Sh G_{2\ell}\right]^1) = d(G_{2\ell})$, we readily see that $d(G_{2\ell}) = d(\wG_{2\ell+1}) = 1$ for all $\ell$. 

Using this fact and the previous lemma, we see that
 since all the columns of the determinant $D$ have homogeneous degree $1$, $D$ will be homogeneous of degree $2s+2 = n+1$, which is the first assertion of the statement.

 The second assertion is also rather simple. Since according to (\ref{lambdaeq}) $\lambda$ is a solution of
\[
\lambda_0 \lambda_1\dots\lambda_n = D^{-1}
\]
we can apply logarithms (adjusting signs if necessary) and transform this equation into (we are re-introducing $k$ as here it is needed)
\[
\sum_{r = 0}^n\eta_{k+r} = -\ln D_k, \hskip 4ex \eta_{k+r} = \ln \lambda_{k+r},
\]
for any $k = 0, 1, \dots, N-1$. If $N$ and $n+1$ are coprimes, this system of  equations has a unique solution for any $D$ (see \cite{MW}). If we apply the scaling, the transformed $\eta_{k+r}$, which we denote $\tilde \eta_{k+r}$ will be the solution of the system
\[
\sum_{r = 0}^n\tilde\eta_{k+r} = -\ln D_k - \ln t^{n+1}, \hskip 2ex k=0,1,\dots,N-1.
\]
Therefore $\tilde\eta_{k+r} = \eta_{k+r} + \nu_{k+r}$, where $\nu_{k+r}$ satisfies the system
\[
\sum_{r = 0}^n\nu_{k+r} = - \ln t^{n+1}, \hskip 2ex k=0, 1, \dots, N-1. 
\]
Clearly $\nu_{k+r} = -\frac1{n+1} \ln t^{n+1}$ for all $r=0,\dots,n$ are solutions, and hence $\tilde\eta_{k+r} = \eta_{k+r} -\frac1{n+1} \ln t^{n+1} = \ln t^{-1}\lambda_{k+r}$. Thus, $d(\lambda_k) = -1$ for all $k$ and the theorem follows.
\end{proof}

We finally arrive at our main result.
\begin{theorem} The pentagram map on $\RP^{2s+1}$ is invariant under the scaling (\ref{scalodd}).
\end{theorem}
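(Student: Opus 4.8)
The plan is to read $T(\ab_k)$ directly off the linear system (\ref{main}) by Cramer's rule and then reduce the whole question to a homogeneity count for the determinants that appear. Since $N_k=(\lambda_k F_k,\dots,\lambda_{k+n}F_{k+n})$ and $\det N_k=1$ by the normalization (\ref{Nmain}), Cramer's rule gives, for $j=1,\dots,n$,
\[
T(a_k^j)=\frac{\lambda_{k+n+1}}{\lambda_{k+j}\,D_k}\,\widetilde{D}_k^{\,j},\qquad \widetilde{D}_k^{\,j}=\det\bigl(F_k,\dots,F_{k+j-1},F_{k+n+1},F_{k+j+1},\dots,F_{k+n}\bigr),
\]
i.e. $\widetilde{D}_k^{\,j}$ is $D_k$ with its $j$-th column replaced by $F_{k+n+1}$. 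Invoking the previous theorem ($d(D_k)=n+1$ and $d(\lambda_k)=-1$) this yields $d(T(a_k^j))=d(\widetilde{D}_k^{\,j})-(n+1)$, so the scaling (\ref{scalodd}) is preserved precisely when $\widetilde{D}_k^{\,j}$ is homogeneous of degree $n+1$ for even $j$ and of degree $n+2$ for odd $j$. Thus invariance of $T$ is \emph{equivalent} to this homogeneity statement, and the rest of the argument is the computation of $d(\widetilde{D}_k^{\,j})$.

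To compute it I would expand the inserted column with the Lemma. Since $n+1=2s+2$ is even, $F_{k+n+1}=\sum_{r=1}^{s+1}\alpha_{2r-1}^{n+1}F_{k+2r-1}+G_{k+n+1}$, a combination of the odd-indexed columns plus the degree-one vector $G_{k+n+1}$ (here, as in the proof of the previous theorem, every $\alpha_i^j$ is homogeneous of degree one). Substituting into the $j$-th column and using multilinearity, each term $\alpha_m^{n+1}\det(\dots,F_{k+m},\dots)$ with $m\neq j$ has a repeated column and dies, while the term $m=j$ survives only when $j$ is odd and then equals $\alpha_j^{n+1}D_k$. Hence $\widetilde{D}_k^{\,j}=E_k^{\,j}$ when $j$ is even and $\widetilde{D}_k^{\,j}=\alpha_j^{n+1}D_k+E_k^{\,j}$ when $j$ is odd, where
\[
E_k^{\,j}=\det\bigl(F_k,\dots,F_{k+j-1},G_{k+n+1},F_{k+j+1},\dots,F_{k+n}\bigr).
\]
As $d(\alpha_j^{n+1})=1$ and $d(D_k)=n+1$, the first summand in the odd case is homogeneous of degree $n+2$, so everything comes down to the homogeneous structure of $E_k^{\,j}$.

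The heart of the argument is to reduce the remaining columns of $E_k^{\,j}$ by the Lemma and exploit the alternating hat/non-hat pattern. Performing column operations from left to right, each $F_{k+i}$ may be replaced, modulo the earlier columns, by its degree-one remainder ($G_{k+i}$ for even $i$, $\wG_{k+i}$ for odd $i$); the only obstruction is that the reduction of a column $i>j$ of parity opposite to $j$ still refers to the deleted column $F_{k+j}$, contributing one extra degree-two term proportional to the remainder of $F_{k+j}$. Thus $E_k^{\,j}$ splits into a degree-$(n+1)$ piece (all columns replaced by remainders, with $G_{k+n+1}$ in slot $j$) plus degree-$(n+2)$ pieces in which exactly one slot carries the remainder of $F_{k+j}$. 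Now the non-hat vectors (the remainders $G$, together with $\r$ and $G_{k+n+1}$) all lie in the $(s+1)$-dimensional span of the even coordinate directions, while the hat vectors $\wG$ lie in the complementary $(s+1)$-dimensional span of the odd ones; hence any $s+2$ non-hat vectors, or any $s+2$ hat vectors, are linearly dependent. Counting parities: for $j$ even the degree-$(n+1)$ piece has the balanced count $(s+1,s+1)$ and survives, while each degree-$(n+2)$ piece acquires a spurious $(s+2)$-nd non-hat column and vanishes; for $j$ odd it is the reverse, since replacing the hat remainder $\wG_{k+j}$ by the non-hat $G_{k+n+1}$ forces $s+2$ non-hat columns into the degree-$(n+1)$ piece, killing it, while the degree-$(n+2)$ pieces keep the balance $(s+1,s+1)$ and survive. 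Picking two degree-two corrections always produces two parallel columns, so no degree-$(n+3)$ term ever appears.

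This gives $\widetilde{D}_k^{\,j}$ homogeneous of degree $n+1$ for even $j$ and of degree $n+2$ for odd $j$, whence $d(T(a_k^{2\ell}))=0$ and $d(T(a_k^{2\ell+1}))=1$ — exactly the statement that $T$ commutes with the scaling (\ref{scalodd}), which is the invariance claimed. The routine parts (the Cramer computation, the bookkeeping of the $\alpha_i^j$, and the fact that all remainders have degree one) follow directly from the Lemma and the preceding theorem; the one genuinely delicate step, and the place I expect the real work to be, is the vanishing of the wrong-degree pieces of $E_k^{\,j}$. That rests entirely on the dimension count for hat/non-hat vectors, so the care must go into verifying that each column reduction deposits its correction term into a family of $s+2$ like-typed vectors and that reintroducing the deleted column $F_{k+j}$ never escapes this count.
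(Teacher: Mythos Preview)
Your proposal is correct and follows essentially the same route as the paper: apply Cramer's rule to (\ref{main}), expand the inserted column $F_{k+n+1}$ via Lemma~3.1, then reduce the remaining $F$-columns to their degree-one remainders $G$/$\wG$ and use the hat/non-hat dimension count (at most $s+1$ of each type) to kill the terms of the wrong degree. The paper splits the odd and even cases of $j$ explicitly and writes out the resulting sums of determinants, whereas you give the uniform parity-count argument, but the mechanism---the surviving terms are exactly those with the balanced $(s+1,s+1)$ hat/non-hat pattern---is identical.
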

\begin{proof} As in the previous proof, we will drop the subindex $k$ unless needed. From (\ref{main}) and using Cramer's rule, we know that
\begin{equation}\label{Todd}
T(a^i) = \frac{\lambda_{2s+2}}{\lambda_i} \frac{D^i}D
\end{equation}
where $D^i$ is equal to
\[
D^i = \det(F, F_1, \dots, F_{i-1}, F_{2s+2}, F_{i+1}, \dots, F_{2s+1})
\]
$i=1, \dots, n$ and $D$ is a in (\ref{N2}). Recall that $F_{2s+2}$ has a recursion formula given in (\ref{induction}). We will separate the odd and even cases.

{\it Case $i=2\ell+1$ odd}. Using (\ref{induction}), noticing that $F_i$ appears only in the expansions  of $F_r$, $r$ even,  and simplifying we can write $D^i$ as
\begin{eqnarray*}
D^i &=& \alpha_i^{2s+2} D + \det(F, F_1, \dots, F_{i-1}, G_{2s+2}, F_{i+1}, \dots F_n) =  \alpha_i^{2s+2} D \\
 &+& \sum_{r=1}^{s-\ell} \alpha_i^{i+2r+1}\det(F, \dots, F_{i-1}, G_{2s+2}, G_{i+1}, \wG_{i+2}, \dots, \wG_{i+2r}, F_i, \wG_{i+2r+2}, \dots \wG_{2s+1})\\
&+& \det(F, F_1, \dots, F_{i-1}, G_{2s+2}, G_{i+1}, \dots, \wG_{2s+1}).
\end{eqnarray*}
The last term is equal to 
\[
\det(F, \wG_1, \dots, G_{i-1}, G_{2s+2}, G_{i+1}, \dots, \wG_{2s+1})
\]
which is zero since we have $s+2$ columns without hats spanning an $s+1$ dimensional space. The middle sum is equal to 
\[
\sum_{r=1}^{s-\ell} \alpha_i^{i+2r+1}\det(F,\wG_1, \dots, G_{i-1}, G_{2s+2}, G_{i+1}, \wG_{i+2}, \dots, \wG_{i+2r}, \wG_i, \wG_{i+2r+2}, \dots \wG_{2s+1})
\]
which is not zero since we can exchange $ \wG_i$ with $G_{2s+2}$ to have alternating hat and non-hat vectors. But we know that the determinant has columns of degree $1$, and hence it has degree $2s+2$.  We will prove that $\alpha_i^j$ are homogeneous and $d(\alpha_i^j) = 1$. This will imply that $D^i$ is homogeneous and $d(D^i) = 2s+3$. Taking this to (\ref{Todd}) and using the previous theorem we have that $T(a^i)$ is homogeneous and $d(T(a^i)) = 1 = d(a^i)$ as desired. We can see that $\alpha_i^j$ is homogeneous of degree $1$ directly from (\ref{alpha1}) and (\ref{alpha2}). Indeed, the beginning value $\alpha_0^{2\ell+1}$ is the last entry of $\Sh G_{2\ell}$, and hence $d(\alpha_0^{2\ell+1}) = 1$. Subsequent coefficients are given by shifts of these, and hence they are all homogenous of degree $1$.

{\it Case $i = 2\ell$ even}. 
If $i$ is even, then $F_i$ will not appear in the expansion (\ref{induction}) for $F_{2s+2}$. Therefore, substituting the expansions from right to left we obtain
\[
D^i =  \det(F, \dots, F_{i-1},G_{2s+2}, F_{i+1}, \dots, F_n)   
\]
\[
= \sum_{r=1}^{s-\ell+1} \det(F, \dots, F_{i-1}, G_{2s+2}, \wG_{i+1}, G_{i+2}, \dots, G_{i+2r-2}, \wG_{i+2r-1} + \alpha_i^{i+2r-1}F_i, G_{i+2r}, \dots \wG_{2s+1}).
\]
Now, splitting the terms and applying (\ref{induction}) recurrently starting with $F_i$, we can rewrite it as
\[
D^i = \det(F, \wG_1, \dots, \wG_{i-1}, G_{2s+2}, \wG_{i+1}, \dots, \wG_{2s+1})  +
\]
\[
\sum_{r=1}^{s-\ell+1} \alpha_i^{i+2r-1}\det(F, \wG_1, \dots, \wG_{i-1}, G_{2s+2}, \wG_{i+1}, G_{i+2}, \dots, G_{i+2r-2},  G_i, G_{i+2r}, \dots \wG_{2s+1}).
\]
But the last terms all vanish since again we have s+2 non-hat vectors generating an $s+1$ dimensional subspace. Hence
\[
D^i = \det(F, \wG_1, \dots, \wG_{i-1}, G_{2s+2}, \wG_{i+1}, \dots, \wG_{2s+1})  
\]
and $d(D^i) = d(D) = 2s+2$. Thus, from (\ref{Todd}) and the lemma we have $d(T(a^i)) = 0 = d(a^i)$.
\end{proof} 

\subsection{The case of $\RP^{2s}$} 

In the even case the scaling that leaves the pentagram map invariant is more involved than in the odd case, for reasons that will be clear along our calculations. It is given by 
\begin{equation}\label{scaleven}
 \ai k{2\ell+1} \to t^{-1+\ell/s} \ai k{2\ell+1}, \ell=0,\dots s-1,\hskip 4ex \ai k{2\ell}\to t^{\ell/s}\ai k{2\ell}, \ell=1,\dots s
\end{equation}
As in the odd dimensional case, to prove the scaling is preserved by the pentagram map, we will

\noindent a.  Prove that the determinant of the matrix
\begin{equation}\label{N3}
D_k =\det \begin{pmatrix} \r_k& F_{k+1} & F_{k+2} & \dots &F_{k+2s-1} & F_{k+2s}\end{pmatrix}
\end{equation}
where $F_i$ is given as in (\ref{F}), is homogeneous with $d(D_k) = 0$ for all $k$. (From the analogue to (\ref{lambdaeq}) in the even case, this implies that $\lambda_k$ are invariant under the scaling.); and

\noindent b.  Prove that the Cramer determinants $D^i$ associated to  $T(\ai k i)$ are also homogeneous with degree that match that of the corresponding $\ai k i$. 

The proof of these two facts are given in the main two theorems of this section, but first a simple lemma.

\begin{lemma} Assume an $r\times r$ matrix $B$ has entries $b_{i j}$ with homogeneity degrees equal to $d(b_{i j}) = \frac {i-j}s$. Then $\det B$ is invariant under (\ref{scaleven}).
\end{lemma}
\begin{proof} The determinant is the sum of $\pm$ products of $r$ entries, no two of them in the same row or column. Hence, the degree of each one of those products will be the sum of the degrees of the entries involved. That means we will be adding all $i's$ and all $j's$, resulting on a degree equal to
$\sum_{i=1}^r\frac is - \sum_{j=1}^r \frac js = 0$
\end{proof}

\begin{theorem} If $n = 2s$, the determinant of the matrix (\ref{N3}) is invariant under the scaling (\ref{scaleven}).
\end{theorem}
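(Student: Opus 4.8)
The plan is to compute the determinant (\ref{N3}) by performing column operations that leave its value unchanged, reducing it to a matrix whose entries are genuinely homogeneous and whose nonzero pattern is a checkerboard; then a parity splitting puts it in the exact form handled by the preceding Lemma. The guiding observation is that, in contrast with the odd case, the individual entries of $F_{k+m}=K_k\cdots K_{k+m-1}\r_{k+m}$ are \emph{not} homogeneous under (\ref{scaleven}). A direct computation (e.g. $F_{k+2}$ already fails) shows that every offending, wrongly-graded term originates in the constant top entry of $\r_k$ (the $1$ forced by $(-1)^n=1$ when $n=2s$), propagated by the last column of $K$ in (\ref{projectivemcm}). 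This is exactly what makes the even case more delicate than the odd one.

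First I would establish a reduction recursion for the columns, in the spirit of the odd-dimensional Lemma (\ref{induction}). Using $F_{k+m}=K_k\,\Sh F_{k+m-1}$ and the explicit action of $K_k$, I would prove by induction on $m$ that the non-homogeneous part of $F_{k+m}$ is precisely a scalar combination of the earlier columns $\r_k,F_{k+1},\dots,F_{k+m-1}$. Hence subtracting those multiples (a column operation that does not change $D_k$) replaces $F_{k+m}$ by a homogeneous remainder $\widetilde F_{k+m}$. For example $F_{k+2}-(F_{k+2})_1\,\r_k$ is already homogeneous, and subtracting a further multiple of the reduced second column clears its even rows; the same bookkeeping works at every step.

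The output is a matrix $B$ with $\det B = D_k$ and a clean staircase structure: $B_{ij}\neq 0$ only when $i\ge j$ and $i\equiv j \pmod 2$, and then it is homogeneous with $d(B_{ij})=\frac{i-j}{2s}$. Indeed the first column is $\r_k$ itself, nonzero on the odd rows with degrees $0,\frac1s,\dots,1$, and each reduced column repeats the preceding degree pattern shifted down one row, which is exactly the formula $\frac{i-j}{2s}$. Since $B$ vanishes across parities, reordering rows and columns so that the odd indices precede the even ones block-diagonalizes it, giving $\det B=\pm\det B^{\mathrm{odd}}\,\det B^{\mathrm{even}}$. Reindexing each block by $i=2p-1$ (respectively $i=2p$) turns $\frac{i-j}{2s}$ into $\frac{p-q}{s}$, so the preceding Lemma applies to each block and shows each block determinant, hence $\det B=D_k$, is invariant under (\ref{scaleven}); this is the assertion $d(D_k)=0$, which through the even analogue of (\ref{lambdaeq}) also forces the $\lambda_k$ to be invariant.

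The main obstacle is the bookkeeping in the reduction step: formulating the correct even-case analogue of (\ref{induction}) and checking, through the interaction of the shift $\Sh$ with the entries $a_k^i$ in the last column of $K_k$, both that all wrongly-graded terms are absorbed by earlier columns and that the surviving remainder carries exactly the degree $\frac{i-j}{2s}$ prescribed by the checkerboard pattern. Tracking how the constant entry of $\r_k$ contaminates and is then removed from successive columns is the crux of the argument, and is precisely the feature that is absent in $\RP^{2s+1}$.
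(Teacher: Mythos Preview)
Your plan is essentially the paper's own argument: reduce the columns via the recursion already recorded in (\ref{induction})--(\ref{alpha2}) (the paper notes it holds verbatim in the even case with the roles of $\r_k$ and $\p_k$ swapped), obtain a checkerboard matrix whose $(i,j)$ entry has degree $(i-j)/(2s)$, split by parity, and apply the preceding Lemma to each block. The paper streamlines two points you leave implicit: it does not re-derive a new recursion but simply invokes (\ref{induction}), and it observes that the two parity blocks are not just separately invariant but are in fact $\Delta$ and $\Sh\Delta$, so one degree computation suffices. One small slip: your claim that $B_{ij}\neq 0$ only when $i\ge j$ is false (for instance $\wG_3$, which sits in column $4$, has nonzero entry in row $2$ coming from the $\p$-term in (\ref{alpha1})); fortunately the argument uses only the parity pattern $i\equiv j\pmod 2$ and the degree formula, not lower-triangularity, so this does not affect the conclusion.
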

\begin{proof}  
 Recall that in the even dimensional case case $\r_k$ is given by
\begin{equation}\label{reven}
\r_k^T = \begin{pmatrix} 1 & 0 & a^2_k & 0& a_k^4 & \dots &0 & a_k^{2s}\end{pmatrix}^T.
\end{equation}
As before we will denote by $\p_k$ the vector containing the odd invariants so that  $\p_k+ \r_k$ is equal to the last column of $K_k$. That is, we are exchanging the roles of $\r_k$ and $\p_k$ in the previous section so that $\r_k$ contains now odd non-zero entries and even invariants, while $\p_k$ contains even nonzero entries and odd invariants. Also as before we will drop the subindex $k$ unless confusing.

First of all notice that the formulas (\ref{induction}), (\ref{alpha1}) and (\ref{alpha2}) can be obtained independently of the dimension and equally with our new choice of $\r$ and $\p$, since the only condition on $\r$ and $\p$ that was used was that $\p+\r$ gives the last column of $K$. Of course, in this case the degree of $G_i$ and $\wG_i$ will be different since we have a different scaling, but all other relations hold true with the new choices of $\p$ and $\r$, including the fact that hat-vectors have zero last entry and non hat vectors don't, and the fact that hat vectors have the same non-zero entries as $\p$, while non hat vectors have the same non-zero entries as $\r$.

Using those equations, we can  write the determinant of (\ref{N3}) as the determinant of the matrix
\[
D = \det\begin{pmatrix} \r & \wG_{1}& G_{2} & \wG_{3} & \dots & \wG_{2s-1} & G_{2s}\end{pmatrix}.
\]
Notice next that the first row contains only zeroes, with the exception of the first entry which is a $1$, since (unlike the previous case) $\p$ has zero first entry. Hence the determinant reduces to 
\[
\det \begin{pmatrix} \wG_{1}& G_{2} & \wG_{3} & \dots & \wG_{2s-1} & G_{2s}\end{pmatrix}.
\]
Here we are abusing the notation by denoting the vectors with the same letter, even though we are ignoring their first entry.  Let us denote by $g_{r}$ the $s$-vector formed by the nonzero entries of $\wG_{r}$. Since $G_{2\ell} = \left[\Sh \wG_{2\ell-1}\right]^1$, we can conclude that
\[
D= \Delta \Sh \Delta
\]
where $\Delta = \det(g_{1}, g_{3}, \dots,  g_{2s-1})$.

To finish the proof, we will show that the entry $(i,j)$ of $\Delta$ is homogeneous of degree $\frac{i-j}s$, and we will apply the lemma. Combining (\ref{alpha1}) and (\ref{alpha2}) we get that
\[
\wG_{2\ell+1} = \left(\Sh G_{2\ell}\right)_{n+1}\p + \left[\Sh G_{2\ell}\right]^1 = \left(\left[\Sh^2 \wG_{2\ell-1}\right]^1\right)_{n+1}\p + \left[\Sh^2  \wG_{2\ell-1}\right]^2.
\]
This relation can be translated to $g_{2\ell+1}$ as
\[
g_{2\ell+1} = \left(\Sh^2 g_{2\ell-1}\right)_{s}\bar{\p} + \left[\Sh^2 g_{2\ell-1}\right]^1
\]
with $\bar{~}$ indicating that we have removed the zero entries, so $\bar{\p}^T = (a^1, a^3, \dots, a^{2s-1})^T$ and likewise with $\bar{[\r]}^1$.
 We calculate the degree of $g_{1}$ first: since $\wG_{1} = a_{1}^{2s}\p + [\r_{1}]^1$, we have that $g_{1} = a_{1}^{2s}\bar{\p} + \bar{[\r_{1}]}^1$. According to (\ref{scaleven}), $d(a_{1}^{2s}) = 1$ and the entries of $\bar{\p}$ have degrees $d(\bar{\p}) =(-1, -1+1/s, -1+2/s, \dots, -1/s)^T$; therefore $d(a_{1}^{2s}\bar{\p}) = (0, 1/s, 2/s, \dots, (s-1)/s)$. Likewise $\bar{[{\r}_{1}]}^1$ has a constant in the first entry (of degree $0$) and degrees $(\ast, 1/s, 2/s, \dots, (s-1)/s)^T$ in the other entries. From here, $d(g_1) = (0, 1/s, 2/s, \dots, (s-1)/s)$, which coincide with $(i-j)/s$ for the entry in place $(i,j)$, since this is the first column.
 
 We now follow with a simple induction: assume that $d(g_{2\ell-1}) = (-(\ell-1)/s, -(\ell-2)/s, \dots, -1/s, 0, 1/s, 2/s, \dots, (s-\ell)/s)^T$. Then, since the degree of $(g_{2\ell-1})_{s}$ is $\frac{s-\ell}s$, $d(\bar{\p}) = (-1, -1+1/s, -1+2/s, \dots, -1/s)^T$, and $d(\left[\Sh^2g_{2\ell-1}\right]^1) = (\ast, \frac{-\ell+1}s, \frac{-\ell+2}s, \dots, \frac{s-\ell-1}s)^T$, we have 
 \begin{equation}\label{gordereven}
 d(g_{2\ell+1}) = d\left(\left[\Sh^2g_{2\ell-1}\right]^1 + \Sh^2 (g_{2\ell-1})_{s-1} \bar{\p}\right) = \begin{pmatrix}-\frac{\ell}s\\ \frac{-\ell+1}s\\\frac{-\ell+2}s \\\vdots\\ \frac{s-\ell-1}s\end{pmatrix},
 \end{equation}
which coincides with $\frac{i-j}s$ since $g_{2\ell+1}$ is the $\ell+1$ column. This ends the proof.
\end{proof}

The last theorem of this paper shows that the Cramer determinant corresponding to $T(a_{k}^i)$ in the linear equation (\ref{main}) has scaling equal to that shown in (\ref{scaleven}), implying that $T$ is invariant under (\ref{scaleven}).

\begin{theorem} Let
\[
D_{k}^{i} = \det\begin{pmatrix} \r_k& F_{k+1} & \dots & F_{k+i-1}& F_{k+2s+1} & F_{k+i+1} & \dots F_{k+2s} \end{pmatrix}
\]
for $i=1,2,\dots, 2s$. Then, $D^i_k$ is homogeneous and the degree of $D_{k}^{i}$ with respect to the scaling (\ref{scaleven}) is given by 
\begin{eqnarray*}
d(D_{k}^{2\ell-1}) &=& -1 + \frac{\ell-1}s = d(a_k^{2\ell-1}),\\ d(D_{k}^{2\ell}) &=& \frac \ell s = d(a_k^{2\ell}), 
\end{eqnarray*}
for $\ell = 1, \dots s$.
\end{theorem}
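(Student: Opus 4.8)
The plan is to turn the statement into a pure degree computation and then re-use the block structure from the previous theorem. Applying Cramer's rule to (\ref{main}) exactly as in the odd case yields
\[
T(\ai ki)=\frac{\lambda_{k+2s+1}}{\lambda_{k+i}}\,\frac{D_k^i}{D_k},
\]
so, since the previous theorem gives $d(D_k)=0$ and the even analogue of (\ref{lambdaeq}) gives $d(\lambda_k)=0$, proving the two displayed degree formulas is precisely the statement that $d(T(\ai ki))=d(\ai ki)$. Hence establishing the degrees of $D_k^i$ both proves this theorem and, combined with the previous one, shows that $T$ is invariant under (\ref{scaleven}).

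First I would expand the intruding column $F_{k+2s+1}$ through the recursion (\ref{induction}), which for $n=2s$ gives $F_{k+2s+1}=\sum_{r=0}^{s}\alpha_{2r}^{2s+1}F_{k+2r}+\wG_{k+2s+1}$, and then discard every term reproducing a column already present in $D_k^i$. Since this expansion involves only the even-indexed columns $F_{k+2r}$ (and $F_k=\r_k$) together with the hat vector $\wG_{k+2s+1}$, the parity of $i$ is decisive. When $i=2\ell-1$ is odd the removed column $F_{k+2\ell-1}$ matches none of the $F_{k+2r}$, so the entire sum collapses and $D_k^{2\ell-1}$ equals the single determinant with $\wG_{k+2s+1}$ in the slot vacated by $F_{k+2\ell-1}$. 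When $i=2\ell$ is even only the term $r=\ell$ survives (reinserting $F_{k+2\ell}$ rebuilds $D_k$), so $D_k^{2\ell}=\alpha_{2\ell}^{2s+1}D_k+\det(\dots,\wG_{k+2s+1},\dots)$ with $\wG_{k+2s+1}$ now occupying the slot of the former non-hat column $F_{k+2\ell}$.

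Next, in both cases I would reduce the remaining columns to their $\wG$/$G$ leading parts as in the previous proof, strip the trivial first row, and arrive at a determinant of the alternating-support vectors. In the odd case the substitution is hat-for-hat, so the factorisation into a hat block and a non-hat block survives and gives $\pm\widetilde\Delta\,\Sh\Delta$, where $\widetilde\Delta$ is $\Delta=\det(g_1,g_3,\dots,g_{2s-1})$ with its $\ell$-th column $g_{2\ell-1}$ replaced by $g_{2s+1}$. By (\ref{gordereven}) the entries of $g_{2s+1}$ have degree $(i-(s+1))/s$, which exceeds the degree $(i-\ell)/s$ of the entries they replace by the constant $-1+(\ell-1)/s$; since the Lemma makes $\Delta$ (and hence $\Sh\Delta$) invariant, multilinearity of the determinant in that one column gives $d(D_k^{2\ell-1})=-1+(\ell-1)/s=d(a_k^{2\ell-1})$. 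In the even case the hat-for-non-hat substitution places $s+1$ hat vectors, all supported on the same $s$ coordinates, so a dimension (pigeonhole) count forces the auxiliary determinant to vanish and leaves $D_k^{2\ell}=\alpha_{2\ell}^{2s+1}D_k$.

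It then remains to compute $d(\alpha_{2\ell}^{2s+1})$. Unwinding the shift recursions (\ref{alpha1})--(\ref{alpha2}) I would write $\alpha_{2\ell}^{2s+1}=\Sh^{2\ell}\alpha_0^{2(s-\ell)+1}$ and identify the base coefficient $\alpha_0^{2(s-\ell)+1}$ with the last nonzero entry of the relevant hat vector, whose degree is read off from (\ref{gordereven}) as $\ell/s$; as $\Sh$ preserves degree this gives $d(D_k^{2\ell})=d(\alpha_{2\ell}^{2s+1})=\ell/s=d(a_k^{2\ell})$, completing both formulas. The main obstacle is the third step: one must verify that the intruding $\wG_{k+2s+1}$ genuinely respects the alternating hat/non-hat pattern — preserving it when $i$ is odd and breaking it by the dimension count when $i$ is even — and then shepherd the fractional, column-dependent degrees of the $g_r$ through the determinant so that they collapse to the single constant shift $-1+(\ell-1)/s$. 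The companion identity $d(\alpha_{2\ell}^{2s+1})=\ell/s$, obtained by running the $\alpha$-recursion back to its base coefficient, is the other point requiring care.
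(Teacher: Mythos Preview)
Your odd case $i=2\ell-1$ is essentially the paper's argument, but you skate over one point. When you ``reduce the remaining columns to their $\wG/G$ leading parts'', the even-indexed columns $F_{i+1},F_{i+3},\dots$ to the right of position $i$ each pick up a residue $\alpha_i^{i+r}F_i$, because the absent column $F_i$ occurs in their expansion (\ref{induction}). After rewriting $F_i$ as $\wG_i$ (via the present even columns) this residue is a hat vector, and you still need the observation that the $s$ pure hat columns $\wG_1,\dots,\wG_{i-2},\wG_{2s+1},\wG_{i+2},\dots,\wG_{2s-1}$ already span the hat subspace, so $\wG_i$ can be absorbed by column operations. The paper makes exactly this step explicit; once it is done your factorisation $\pm\widetilde\Delta\,\Sh\Delta$ and the degree count are correct.

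Your even case $i=2\ell$ has a genuine gap. After splitting off $\alpha_{2\ell}^{2s+1}D_k$ you claim that
\[
\det(\r,F_1,\dots,F_{2\ell-1},\wG_{2s+1},F_{2\ell+1},\dots,F_{2s})
\]
vanishes by a hat-vector dimension count, but it does not. When you reduce the odd-indexed columns $F_{2\ell+1},F_{2\ell+3},\dots,F_{2s-1}$, their expansion (\ref{induction}) involves the missing even column $F_{2\ell}$, so each of them retains a \emph{non-hat} residue proportional to $G_{2\ell}$. This residue cannot be absorbed: once the first row and the $\r$-column are stripped off, the pure non-hat columns are $G_2,\dots,G_{2\ell-2},G_{2\ell+2},\dots,G_{2s}$, only $s-1$ of them in an $s$-dimensional non-hat subspace, so $G_{2\ell}$ is generically not in their span. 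Expanding multilinearly, the ``all hat'' term does vanish by your pigeonhole count, but the $s-\ell$ cross terms with exactly one $G_{2\ell}$ survive. These are the paper's determinants $\Delta_i^p$, and one must check separately that each $\Delta_i^p$ has degree $-1+(\ell+q)/s$ (same $\wG_{2s+1}\otimes\wG_{i+p}^{-1}$ trick) and that its coefficient $\alpha_i^{i+p}$ has degree $1-q/s$, so that every term contributes degree $\ell/s$. Thus your identity $D_k^{2\ell}=\alpha_{2\ell}^{2s+1}D_k$ is false, although the degree conclusion is salvaged once these extra terms are handled.
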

Using this, the previous theorem, and the fact that from (\ref{main})
\[
T(a_k^i) = \frac{\lambda_{k+2s+1}}{\lambda_{k+i}}\frac{D_k^i}{D_k}
\]
we conclude that (\ref{scaleven}) preserves $T$.
\begin{proof} 

{\it Case $i = 2\ell-1$ odd}. As in the proofs of previous theorems we will drop the subindex $k$. Using the relations (\ref{induction}), and the fact that $F_i$ does not appear in the expansion of $F_{2s+1}$, we can reduce $D^i$ to the expression
\[
D^i = \det\begin{pmatrix} \r& F_{1} & F_{2}& \dots & F_{i-1} & \wG_{2s+1}& F_{i+1} & F_{i+2} & \dots  &  F_{2s}\end{pmatrix}.
\]
Now, every $F_{i+r}$, with $r$ odd, can be substituted by $G_{i+r}+\alpha_i^{i+r} F_i$, while every $F_{i+r}$ with $r$ even can be substituted by $\wG_{i+r}$ since $F_i$ does not appear in its expansion. Substituting from right to left we get
\[
D^i = \det\begin{pmatrix}  \r & \dots & F_{i-1} & \wG_{2s+1}& a_{i}^{i+1} F_i + G_{i+1} & \wG_{i+2} &a_{i}^{i+3} F_i + G_{i+3} & \wG_{i+4} \dots  &  a_{i}^{2s} F_i+G_{2s}\end{pmatrix}
\]
\[
=\det\begin{pmatrix}  \r& \dots & F_{i-1} & \wG_{2s+1}& a_{i}^{i+1} \wG_i + G_{i+1} & \wG_{i+2} &a_{i}^{i+3} \wG_i + G_{i+3} & \wG_{i+4} \dots  &  a_{i}^{2s} \wG_i+G_{2s}\end{pmatrix}.
\]
Given that $\wG_{1}, \wG_{3}, \dots, \wG_{i-2}, \wG_{2s+1},\wG_{i+2},\dots, \wG_{2s-1}$ have, generically, full rank and $\wG_i$ belongs to this subspace, we can remove $\wG_i$ from the formula to obtain
\[
D^i = \det\begin{pmatrix}  \r& \wG_{1} & G_{2}& \dots & G_{i-1} & \wG_{2s+1}& G_{i+1} & \wG_{i+2} & \dots  &  G_{2s}\end{pmatrix}.
\]
Let $W^{i} = \wG_{2s+1}\otimes \wG_{i}^{-1} \otimes \wG_{i}$ represent the vector obtained by multiplying each one of the entries of $\wG_{2s+1}$ by the corresponding entry of $\wG_{i}$ and its inverse; we see that the value of $D^{i}$ does not change if we substitute $\wG_{2s+1}$ by $W^{i}$. But since (\ref{N3}) is invariant under scaling, we conclude that, if the entries of $ \wG_{k+2s+1}\otimes \wG_{k+i}^{-1} $ have all equal degree given by $d^i$, then $D^i$ is homogeneous and
\[
d(D^{i}) = d^i. 
\]
And indeed all entries have equal degree since, according to (\ref{gordereven}), $d(g_{2s+1}\otimes g_{i}^{-1}) = d(g_{2s+1})- d(g_{i}) = (-1, -1+1/s, \dots ,-1/s) - ( -(\ell-1)/s, -(\ell-2)/s, \dots, -1/s,0,1/s,\dots, (s-\ell)/s) = (-1+(\ell-1)/s, -1+(\ell-1)/s, \dots, -1+(\ell-1)/s)$. Therefore,  $d^i = -1+(\ell-1)/s$ and 
\[
d(D^{2\ell-1}) = -1+(\ell-1)/s. 
\]
{\it Case $i = 2\ell$ even}. If $i$ is even
\[
D^i = \det( \r, F_1, \dots, F_{i-1}, \wG_{2s+1}+\alpha_i^{2s+1}F_i, F_{i+1}, \dots, F_{2s})
\]
\[
=\det(\r, F_1, \dots, F_{i-1}, \wG_{2s+1}+\alpha_i^{2s+1}F_i, \wG_{i+1}+\alpha_i^{i+1}F_i, G_{i+2}, \wG_{i+3}+\alpha_i^{i+3}F_i, G_{i+4}, \dots, G_{2s}).
\]
This determinant breaks into one determinant with no $F_i$ and a sum of determinants with $F_i$ in different positions. The determinant with no $F_i$ vanishes since it is equal to
\[
\det(\r, \wG_1, G_2, \dots, G_{i-2}, \wG_{i-1}, \wG_{2s+1}, \wG_{i+1}, G_{i+2}, \dots, G_{2s})
\]
which contains more hat-vectors that the subspace they span. The remaining sums has one term where $F_i$ is in its original position (the one corresponding to $\wG_{2s+1}+\alpha_i^{2s+1}F_i$) and several terms with $F_i$ located in the remaining hat-entries. Thus, using (\ref{induction}) we have
\[
D^i = \alpha_i^{2s+1}D 
\]
\[+ \sum_{p=2q+1, q=0}^{s-\ell-1}\alpha_i^{p+i}\det(\r_0,\wG_1, \dots, \wG_{i-1}, \wG_{2s+1}, \wG_{i+1}, \dots, G_{i+p-1}, G_i, G_{i+p+1}, \dots, G_{2s})
\]
\[
= \alpha_i^{2s+1}D-\sum_{p=2q+1, q=0}^{s-\ell-1}\alpha_i^{p+i}\Delta_i^p
\]
where $\Delta_i^p = \det(\r, \wG_1, \dots, \wG_{i-1}, G_i, \wG_{i+1}, \dots, G_{i+p-1}, \wG_{2s+1}, G_{i+p+1}, \dots, G_{2s})$. 

Once again we can substitute $\wG_{2s+1}$ with $\wG_{2s+1}\otimes\wG_{i+p}^{-1}\otimes\wG_{i+p}$ and conclude that if $\wG_{2s+1}\otimes\wG_{i+p}^{-1}$ is homogeneous of degree $d_i^p$, then $\Delta_i^p$ will be homogeneous and $d(\Delta_i^p) = d(D)+d_i^p = d_i^p$. And indeed, from (\ref{gordereven}) we have that $d(g_{2s+1}\otimes g_{i+p}^{-1})$ is given by
\[
d(g_{2s+1})-d(g_{i+p}) = \begin{pmatrix}-1\\-1+1/s\\\vdots\\ -1/s\end{pmatrix} -   \begin{pmatrix}-(\ell+p)/s\\-(\ell+q-1)/s\\\vdots \\(s-1-\ell-q)/s\end{pmatrix} = \begin{pmatrix}-1+(\ell+q)/s\\-1+(\ell+q)/s\\\vdots\\ -1+(\ell+q)/s\end{pmatrix},
\]
and hence $d(\Delta_i^p) = -1 + (\ell+q)/s$. Our final step is to prove that $\alpha_i^j$ are homogeneous and to calculate their degrees. We know that $\alpha_0^{2r+1} = \left(\Sh G_{2r}\right)_{n+1} = \left[\Sh \wG_{2r-1}\right]_{n+1}^1$. Therefore, using (\ref{gordereven}) $d(\alpha_0^{2r+1}) = 1-r/s$. But, from equations (\ref{alpha1})-(\ref{alpha2}) we know that $d(\alpha_i^{p+i}) = d(\Sh^i\alpha_0^p) = d(\Sh^i\alpha_0^{2q+1}) = 1-q/s$. Therefore, \[
d(\alpha_i^{p+i} \Delta_i^p) = 1-q/s-1+(\ell+q)/s = \ell/s.
\]
We also have that 
\[
d(\alpha_i^{2s+1}) = d(\Sh^i\alpha_0^{2s-2\ell+1}) = 1-(s-\ell)/s = \ell/s
\]
which implies that $d(T(a^{2\ell})) = d(D^i) = \ell/s = d(a^{2\ell})$.
\end{proof}
\section{Discussion}
As we explained in the introduction and as explained in \cite{KS}, once the invariance under scaling is stablished, complete integrability (in the sense of existence of a Lax representation) follows. The authors of \cite{KS} showed that the generalization of the pentagram map studied in this paper is a discretization of the Boussinesq equation, or $(2,n+1)$-AGD flow - the same flow (realized in higher dimensions) that the original pentagram map is a realization of. It would be very interesting to investigate whether or not higher order AGD flows are also realized by integrable maps defined through the intersection of different subspaces. As shown in \cite{M1}, this is a non-trivial problem since in order to realize a higher dimensional flow one needs to break the very nice symmetry in the indices that the pentagram map has, and hence it is very unlikely that one would get a map that is invariant under a scaling. Still, invariance under scaling is only one possible technique to obtain a Lax representation, and other venues could be followed instead.

\end{document}